\theoremstyle{plain}
\numberwithin{equation}{section}
\theoremstyle{definition}\newtheorem{num}{}[section]} %numerotation
\newcommand{\beq}{\begin{equation}}
\newcommand{\eeq}{\end{equation}}
\newtheorem{theorem}[num]{Theorem}
\newtheorem*{mainthm}{Main Theorem}
\newtheorem{lemma}[num]{Lemma}
\newtheorem{proposition}[num]{Proposition}
\newtheorem{corollary}[num]{Corollary}
\theoremstyle{definition}}
\theoremstyle{definition}}
\theoremstyle{definition}}
\theoremstyle{definition}\newtheorem{remark}[num]{Remark}}
\newcommand{\D}{\mathbb D}
\newcommand{\R}{\mathbb R}
\newcommand{\C}{\mathbb C}
\newcommand{\BH}{{\mathcal B}(H) }
\newcommand{\BX}{{\mathcal B}(X) }
\newcommand{\Ker}{\operatorname{Ker}}
\newcommand{\Ran}{\operatorname{Ran}}
\newcommand{\pocc}{\mathcal{S}}
\newcommand{\ep}{\varepsilon}
\newcommand{\ffi}{\varphi}
\newcommand{\tfi}{\widetilde{\ffi}}
\newcommand{\ld}{\lambda}
\newcommand{\sm}{\sigma}
\begin{document}

%%%%% To ease editing, for IMPAN journals add:

\baselineskip=17pt

%%%%%%%%%%%

\title[Averaged products of projections in Banach spaces]{Geometric, spectral
and
asymptotic properties
of averaged products of projections in Banach spaces}
\author{Catalin Badea }
\address{Laboratoire Paul Painlev\' e, Universit\'e Lille 1, CNRS UMR 8524, B\^at. M2,
F-59655 Villeneuve d'Ascq Cedex, France
}
\email{badea@math.univ-lille1.fr}
\author{Yuri I. Lyubich}
\address{Department of Mathematics, Technion, 32000, Haifa, Israel}
\email{lyubich@tx.technion.ac.il}

\date{}

\begin{abstract}
According to the von Neumann-Halperin and Lapidus theorems, in a Hilbert space
the iterates of products or, respectively, of convex combinations of
orthoprojections are strongly convergent.
We extend these results to the iterates of convex combinations of products
of some projections in a complex Banach space. The latter is assumed
uniformly convex or uniformly smooth for the orthoprojections,
or reflexive for more special projections,
in particular, for the hermitian ones. In all
cases the proof of convergence is based on a known criterion
in terms of the boundary spectrum.
\end{abstract}

\subjclass[2010]{Primary 47A05; Secondary 46B20 , 47A10}

\keywords{orthoprojections, Apostol modulus, boundary spectrum}

\maketitle

\section{Introduction and background}

\begin{num}
{\bf What this paper is about.}
Let $H$ be a Hilbert space, and let
$M_1, \dots,M_N$ be closed subspaces of $H$. Denote by $P_k$
the orthoprojection onto $M_k$, and let
$T = P_1P_2 \cdots P_N$. It was proved by von Neumann \cite{neumann} for $N=2$ and by
Halperin \cite{halperin} for any $N$ that $T^n$ with $n\rightarrow\infty$
converges strongly to the orthoprojection onto $M_1\cap M_2 \cap \cdots \cap M_N$.
The same was proved by Lapidus \cite{lapidus} for
$T = \sum_{k=1}^N \alpha_kP_k$ with $\alpha_k > 0$, $1\le k\le N$,
and $\sum_{k=1}^N\alpha_k = 1$.
Some different proofs of these results were recently given in \cite{MR}.
The von Neumann-Halperin and Lapidus theorems were generalized
to uniformly convex Banach spaces by Bruck and Reich \cite{bruck/reich}
and Reich \cite{reich/LMA}, respectively. For a survey see \cite[Chapter 9]{deutsch:book}.

In the present paper we consider the situation when $T$
is a convex combination of products of some projections in a complex Banach space.
With some concordance between its geometry (uniform convexity or
uniform smoothness, or reflexivity) and a class of projections
(orthoprojections, hermitian projections, etc.)
we establish a spectral property of $T$ which
implies the strong convergence of $T^n$ as $n\rightarrow\infty$.
The necessary background is presented below.
\end{num}

\begin{num}
{\bf Spaces and operators.}
From now on we denote by $X$ a complex
Banach space and by $\BX$ the Banach algebra of linear bounded operators
on $X$. The identity operator will be denoted by $I$.

Recall that a space $X$ is said to be \emph{uniformly convex} if for every
$\ep\in (0,1)$ there exists $\delta\in (0,1)$ such that for any two vectors $x$ and $y$
with $\|x\|\le 1$ and $\|y\|\le 1$ the inequality $\|x+y\|/2>1-\delta$
implies $\|x-y\|<\ep$. Accordingly, the nondecreasing function
$$\delta_X(\ep)=\inf\left\{1-\frac{\|x+y\|}{2}:\|x\|\le 1,\|y\|\le 1,\|x-y\|\ge\ep\right\}$$
is called the \emph{modulus of convexity} of the space $X$. This classical definition, due to
Clarkson \cite{clarkson}, can be formally
applied to  all Banach spaces, so the uniformly convex spaces are just those which
satisfy $\delta_X(\ep)>0$ for all $\ep$.
Every Hilbert space $H$ is uniformly convex, its modulus of convexity is
$$\delta_H(\ep) = 1-\sqrt{1 -\frac{\ep^2}{4}}.$$
For more information on the modulus of convexity see e.g. \cite{BHW}, \cite{GR} and the
references therein.
%In the opposite case there exists
%$\ep$ such that $\delta_X(\ep)=0$. The supremum $\ep_X^0$ of such $\ep$ is called
%\emph{characteristic of convexity} of $X$ \cite{goebel}. Since the function
%$\delta_X(\ep)$ is nondecreasing, it is equal to zero on the interval $(0,\ep_X^0)$,
%while it is positive for $\ep>\ep_X^0$. Thus, one can characterize
%the uniformly convex spaces by setting $\ep_X^0=0$.

A space $X$ is called \emph{uniformly smooth} if for every
$\ep > 0$ there exists $\delta > 0$ such that the inequality $\|x+y\|+\|x-y\|<2+\ep\|y\|$
holds for any two vectors $x$ and $y$ with $\|x\|= 1$ and $\|y\|\le\delta$.
A relevant modulus of smoothness was introduced by Day \cite{day}.
However, for the purposes of this paper we only need to know that
all uniformly convex and all uniformly smooth spaces are reflexive and a space $X$ is
uniformly smooth if and only if its dual $X^*$ is uniformly convex, see e.g. \cite{LT}.

Let $H$ be a Hilbert space. An operator $T\in\BH$ is hermitian
($\equiv$ self-adjoint) if and only if $\|\exp(itT)\| = 1$ for all real $t$. In any Banach space $X$
the latter property is a definition of a hermitian operator. (In \cite{lyu1}
such operators were called $\emph{conservative}$. This is just the case when
$T$ and $-T$ are \emph{dissipative}, i.e. generate semigroups of contractions
\cite{lumphi}).

Note that every real combination of pairwise commuting
hermitian operators is hermitian as well. In particular,
the operator $T-\alpha I$ is hermitian for any hermitian $T$ and any real $\alpha$.

For any operator $T\in\BX$ its spectrum is usually denoted by $\sigma(T)$.
If $T$ is hermitian then  $\sigma(T)\subset\R$.
If $T$ is a contraction, i.e. $\|T\|\le 1$, then $\sigma(T)\subset\overline{\D}$,
where $\D$ is the open unit disk in the complex  plane. The intersection of
$\sigma(T)$ with the unit circle $\partial\D$
is called the \emph{boundary spectrum} of the contraction $T$. Every
point $\ld\in\sigma(T)\cap \partial\D$ of the boundary spectrum is an
approximate eigenvalue, i.e. there is a sequence
of vectors $x_k$ of norm 1 such that $Tx_k-\ld x_k\rightarrow 0$.
The boundary spectrum may be empty. This happens if and only if there is
$n\ge 1$ such that $T^n$ is a strict contraction (i.e. $\|T^n\|<1$) or,
equivalently, $\|T^n\|\to 0$ as $n\to\infty$.
\end{num}
\begin{num}
{\bf Classes of contractions.}
A contraction is called \emph{primitive} if its boundary spectrum is
at most the singleton $\{1\}$. \emph{If the space $X$ is reflexive then the
iterates $T^n$ of any primitive contraction $T\in\BX$ are strongly convergent.}
This fact is the key to all convergence problems studied in the present paper.
Actually, it is a purely logical combination of two known general results:

1) If the space $X$ is reflexive then every contraction $T$ with at most countable
boundary spectrum is almost periodic, i.e. all orbits $(T^nx)_{n\ge 0}$
are precompact \cite{vulyu}.

2) In any Banach space the iterates of any
primitive  almost periodic contraction are strongly convergent \cite{jami}.
(See also \cite{lyu2} for a general theory of almost periodic operator
semigroups.)

An alternative proof (see Section 4 of the present paper) uses the
Katznelson-Tzafriri theorem \cite{KaTz}: \emph{in any Banach space}
$$\lim_{n\to\infty} \|T^{n}-T^{n+1}\| = 0$$
\emph{for every primitive contraction} $T$.

Note that all the results stated above for contractions are automatically
true for any power bounded operator $T\in\BX$ since $T$ is a contraction in an
equivalent norm on $X$.  On the other hand, even the weak convergence of $T^n$
implies the power boundedness of $T$.

The following geometric condition was introduced by Halperin in \cite{halperin}:
\begin{equation}
\label{H}
\tag{H} \textrm{ there is} \quad K \ge 0 \quad \textrm{such that }
\quad \|x-Tx\|^2 \le K\left( \|x\|^2 - \|Tx\|^2 \right) \quad (x\in X) .
\end{equation}
Under this condition (the same as ($K$) in \cite{DR}), $T$ is a contraction, and all strict
contractions
satisfy \eqref{H}. We denote by $K(T)$ the smallest value of $K$.
In particular, $K(I)=0$.

Halperin proved that in a Hilbert space the iterates of every
\eqref{H}-contraction are strongly convergent. In fact, this is
true in any reflexive Banach space. Indeed, from \eqref{H} it follows that
\begin{equation}
\label{S}
\tag{S} \|x_k\|\le 1,\quad \|Tx_k\| \to 1
\Rightarrow x_k-Tx_k \to 0  \quad \textrm{strongly}.
\end{equation}
However, \emph{every \eqref{S}-contraction is primitive}. Indeed, let
$\|Tx_k-\lambda x_k\|\to 0$ for a $\lambda\in\partial\D$ and
a sequence of normalized vectors $x_k$. Then $\|Tx_k\| \to 1$,
hence $\|Tx_k-x_k\|\to 0$ by condition \eqref{S}. Therefore,
$\lambda=1$. As a result, \emph{the iterates of every \eqref{S}-contraction
in a reflexive Banach space are strongly convergent.}

In Hilbert space this was proved in \cite{amemiya/ando}, where the condition
\eqref{S} appears together with its weak version
\begin{equation}
\label{W}
\tag{W} \|x_k\|\le 1,\quad \|Tx_k\| \to 1\Rightarrow
x_k-Tx_k \to 0  \quad \textrm{weakly},
\end{equation}
and the correspondig convergence result. The latter was extended to the
reflexive Banach space in \cite{DKR}.

Obviosly, the condition \eqref{W} implies
\begin{equation}
\label{W'}
\tag{W'} \|Tx\| = \|x\|\Rightarrow Tx = x .
\end{equation}
Conversely, \eqref{W'} implies \eqref{W} if the space is Hilbert
(see [3]) or, more generally, if it is a reflexive Banach space with a weakly
sequentially continuous duality map (see [12]).

Note that for the strict contractions the conditions \eqref{S} and
\eqref{W} are formally fulfilled but empty in content.

In \cite{dye:IEOT} Dye proved that in a Hilbert space the condition
\eqref{H} is equivalent to
\begin{equation}
\label{D}
\tag{D} \textrm{there is}\quad r\in (0,1):\|T-rI\| \le 1-r.
\end{equation}
Obviously, under the condition \eqref{D} the operator $T$ is a contraction.
Hence, $\|T-rI\| \ge 1-r$, so, finally, $\|T-rI\|= 1-r$.

\emph{Every \eqref{D}-contraction is primitive.} Indeed,
if $\ld \in \sigma(T)$, then $\ld - r \in \sigma(T-rI)$, so
$|\ld - r| \le \|T-rI\| \le 1-r $, whence $\ld =1$ for $|\ld|=1$.
Thus,\emph{ the iterates of every
\eqref{D}-contraction in a reflexive Banach space are strongly convergent.}
\end{num}
\begin{num}
{\bf Projections.}
Recall that a linear operator $P\in\BX$ is called a \emph{projection}
if $P^2=P$ or equivalently, $\Ker(P)=\Ran(I-P)$. Obviosly,
$\|P\|\ge 1$ if $P\neq 0$. A projection $P$ is called an \emph{orthoprojection}
if it is a contraction, i.e. $\|P\| = 1$ or $P=0$.
In Hilbert space this definition is equivalent to the standard one:
the subspaces $\Ker(P)$ and $\Ran(P)$ are mutually orhogonal. Equivalently,
this means that $P$ is hermitian. \emph{In any Banach space every hermitian
projection is an orthoprojection}. Indeed, for any projection $P$ we have
\begin{equation}\label{hop}
\exp(itP) = (I-P) + e^{it}P.
\end{equation}
Hence,
$$ P=\frac{1}{2\tau}\int_{-\tau}^{\tau}\exp(itP)e^{-it}\textrm{d}t$$
that yields $\|P\|\le 1$ if $P$ is hermitian.  However, if $P$ is a
hermitian projection then so is $I-P$ , while for the
orthoprojections this is not true, in general. Another specific feature
of the non-Hilbert situation is that for some subspaces the
orthoprojections do not exist. We refer the
reader to \cite{ortho} and \cite{berkson} for more details and references.

For our purposes it is important to note that \emph{all \eqref{D}-projections
are orthoprojections}.
Also note that \emph{every hermitian projection $P$ satisfies}
\eqref{D} with $r=1/2$, i.e. it is a\emph{$u$-projection} in the sense
of \cite{GKS}. This immediately follows from \eqref{hop}
by taking $t=\pi$. Obviously, if $P$ is a $u$-projection
then so is $I-P$ and both are orthoprojections.
\begin{mainthm}
\label{thm:main}
Let $P_1, \cdots, P_N$ be some orthoprojections in a complex Banach space $X$,
and let $\pocc = \pocc(P_1,\cdots,P_N)$ be the convex multiplicative semigroup
generated by $P_1, \cdots, P_N$, i.e. the convex hull
of the semigroup consisting  of all products with factors
from $\{P_1, \cdots, P_N\}$. Assume that one of the following conditions
is satisfied:
\begin{itemize}
\item[(i)] the space $X$ is uniformly convex;

\item[(ii)] the space $X$ is uniformly smooth;

\item[(iii)] the space $X$ is reflexive and all $P_k$ are of class \eqref{D}.
\end{itemize}
Then for every operator $T\in\pocc(P_1,\cdots,P_N)$ the iterates $T^n$ converge strongly
to an orthoprojection $T^{\infty}$.
In addition, if $P_k$ are of class \eqref{W'} then
\begin{equation}
\label{eq:ran}
\Ran(T^{\infty})= \cap_{k\in F_T}\Ran(P_k)
\end{equation}
where $F_T$ is the set of all indices $k$ occurring in the decomposition
of $T$ as a member of $\pocc(P_1,...,P_N)$.
The formula \eqref{eq:ran} is true in the class of all
orthoprojections if the space $X$ is
uniformly convex or uniformly smooth and strictly convex.
\end{mainthm}
Recall that a Banach space is called \emph{strictly convex}
if all points of its unit sphere are extreme.

In the case (i) the strong convergence of $T^n$, where $T$ is a product
or convex combination of orthoprojections, was proved in
\cite{bruck/reich} and in \cite{reich/LMA}, respectively. The space $X$
in these papers is real, but the results are automatically
true for the complex unifomly convex spaces by realification. On the other hand,
there is an example of divergence in $l^{\infty}_{4,\R}$, i.e in $\R^4$ endowed with the
max-norm (\cite{bruck/reich}, p.464).  Another related example
is in \cite{MR03}. In fact, there is an example even
in $l^{\infty}_{2,\R}$, a fortiori, in $l^{\infty}_{2,\C}$. Namely, let
$$
P_1 =\left(
\begin{array}{cc}
1 & 0\\
-1 & 0
\end{array}
\right) \quad \textrm{ and } P_2 = \left(
\begin{array}{cc}
0 & 1\\
0 & 1
\end{array}
\right).
$$
Then
$$(P_1 P_2)^n =\left(
\begin{array}{cc}
0 & (-1)^{n+1}\\
0 & (-1)^{n}
\end{array}
\right),$$
so the iterates  $(P_1P_2)^n$ are divergent.

The space in our example
is not strictly convex. An open question is about existence
of an example of divergence in a strictly convex space. For the affirmative
answer the space must be infinite-dimensional since every finite-dimensional
strictly convex space is uniformly convex.

For any Banach space $X$ and its closed subspace $M$, we denote by $P_M(x)$,
$x\in X$, the set of points in $M$ whose distance to $x$ is minimal.
If $X$ is reflexive then the set $P_M(x)$ is not empty for every $x$.
If, in addition, $X$ is strictly convex, then $P_M(x)$ is a singleton.
In this situation $P_M(x)$ can be considered as a point in $X$ and $P_M$
as a mapping $X\to X$, a \emph{nearest point projection} onto $M$.
In general, this 'projection'  is nonlinear. However, in a Hilbert space
$P_M$ coincides with the orthoprojection onto $M$.

For a strictly convex reflexive space $X$ with $\dim X>2$ Stiles proved
in \cite{stiles} that if $(P_MP_N)^n$
converges strongly to $P_{M\cap N}$ for every pair $(M,N)$ of closed subspaces
of $X$, then $X$ is a Hilbert space. Thus, the von Neumann theorem cannot be
extended to the nearest point projections in a non-Hilbert space.
See however \cite[Lemma 3.1]{reich/LMA} for a relation between linear
nearest point projections and orthoprojections. This makes it
possible to obtain a counterpart of the Main Theorem for
linear nearest point projections. This observation was kindly
communicated to us by S.~Reich.

Note that the weak convergence of the iterates of a product or a convex combination
of orthoprojections in a uniformly smooth space follows from \cite{bruck/reich} and
\cite{reich/LMA} by duality.
\end{num}

\begin{num}
{\bf Organization of the paper.}
The next section contains some information on the Apostol modulus
$\ffi_T(\ep)$ and its modification $\tfi_T(\ep)$ for a contraction $T$
in a Banach space. In Section 3 we apply it to prove that
the classes \eqref{H}, \eqref{S}
and \eqref{D} are multiplicative semigroups, furthermore, \eqref{S} and \eqref{D}
are convex . This is an important ingredient of the proof of
the Main Theorem. The latter is given in Section 4 after a proof of the convergence of
the iterates of a primitive contraction in a reflexive Banach space.
We conclude with an Appendix (Section 5) where we study some relations between
the Apostol moduli and a geometric characteristic of the boundary
spectrum. This yields a new look at a generalization of the
Katznelson-Tzafriri theorem obtained by Allan and Ransford \cite{allan/ransford}.
\end{num}
\begin{num}
{\bf Acknowledgments.}
The first author was supported in part by the ANR-Projet Blanc DYNOP.
The work of the second author was carried out within the framework of the TODEQ program.
We thank Eva Kopeck\'a, Simeon Reich and the referee for helpful discussions,
remarks and references.
\end{num}

\section{The Apostol modulus}

\begin{num}
{\bf Definitions and basic facts.}
For a contraction $T \in \BX$ we consider the \emph{Apostol modulus}
$$\varphi_T (\varepsilon) = \sup \{  \|x - Tx\| :
\|x\| \le 1,\quad \|x\| - \|Tx\| \le \varepsilon\},\quad 0<\varepsilon \le 1 . $$
This function was introduced and studied by Apostol \cite{apostol}
in the case of Hilbert space.
For our purposes the following modification is convenient:
$$\widetilde{\varphi}_T (\varepsilon) = \sup \{\|x - Tx\| :
\|x\| \le 1,\quad 1 - \|Tx\| \le \varepsilon\}. $$
This definition is correct if and only if $\|T\|=1$ since this is the
only case when the set $\{x:\|x\| \le 1, 1 - \|Tx\| \le \varepsilon\}$
is nonempty for all $\varepsilon$. Thus, we will assume $\|T\|=1$ anytime
when dealing with $\widetilde{\varphi}_T (\varepsilon)$. On the other
hand, in all further
applications the case $\|T\|<1$ is trivial.

Obviously, both  functions $\ffi_T(\varepsilon)$ and $\tfi_T(\varepsilon)$ are
nondecreasing and
\begin{equation}
\label{eq:upp}
0\le\tfi_T(\varepsilon) \le \ffi_T(\varepsilon)\le \|I-T\|\le 2.
\end{equation}
Actually, the most interesting information relates to their behavior as $\ep\to 0$.
Accordingly,  we consider
$$ \varphi_T^0 = \lim_{\varepsilon \to 0} \varphi_T(\varepsilon)=
\inf_{\varepsilon > 0}\varphi_T(\varepsilon)\geq 0$$
and
$$\tfi_T^0 =\lim_{\varepsilon \to 0} \tfi_T (\varepsilon)=
\inf_{\varepsilon > 0}\tfi_T (\varepsilon)\geq 0.$$
It turns out that these limit values coincide. In this sense the difference
between the two versions of the Apostol modulus is not essential.
\begin{lemma}
\label{lem:basin}
If $T$ a contraction of norm 1 and $T\neq I$ then $\ffi_T(\varepsilon)>0$ for
all $\ep$ and
$$0\le\ffi_T(\varepsilon)\le
\tfi_T\left(\frac{\|I-T\|\varepsilon}{\ffi_T(\varepsilon)}+0\right).$$
\end{lemma}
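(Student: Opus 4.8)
The plan is to treat the two assertions separately. For the positivity claim $\ffi_T(\ep)>0$, I would argue by contradiction: suppose $\ffi_T(\ep)=0$ for some $\ep$. Then every vector $x$ with $\|x\|\le 1$ and $\|x\|-\|Tx\|\le\ep$ is a fixed point of $T$. Since $\|T\|=1$, there is a unit vector $u$ with $\|Tu\|>1-\ep$, and applying the above to $u$ gives $Tu=u$. Now for an arbitrary unit vector $v$ and any $s\in(0,\ep/2]$, the vector $x_s=(1-s)u+sv$ satisfies $\|x_s\|\le 1$ and, using $Tu=u$ together with $\|Tv\|\le 1$, $\|Tx_s\|=\|(1-s)u+sTv\|\ge (1-s)-s=1-2s$, so that $\|x_s\|-\|Tx_s\|\le 2s\le\ep$. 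Hence $Tx_s=x_s$, which forces $Tv=v$; as $v$ is arbitrary this gives $T=I$, contradicting $T\neq I$.

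For the main inequality, write $\ffi=\ffi_T(\ep)$ and $\kappa=\|I-T\|>0$. The key step is a normalization: if $\|x\|\le 1$, $\|x\|-\|Tx\|\le\ep$ and $x\neq 0$, set $y=x/\|x\|$; then $\|y\|=1$ and $1-\|Ty\|=(\|x\|-\|Tx\|)/\|x\|\le\ep/\|x\|$, so by the very definition of $\tfi_T$ one gets $\|y-Ty\|\le\tfi_T(\ep/\|x\|)$, and therefore $\|x-Tx\|=\|x\|\,\|y-Ty\|\le\tfi_T(\ep/\|x\|)$. Combined with the elementary bound $\|x\|\ge\|x-Tx\|/\kappa$, this shows that any $x$ satisfying the two constraints with $\|x-Tx\|>\ffi-\eta$ has $\ep/\|x\|<\kappa\ep/(\ffi-\eta)$, whence, by monotonicity of $\tfi_T$,
$$
\ffi-\eta<\tfi_T\!\left(\frac{\kappa\ep}{\ffi-\eta}\right).
$$
Note that $x\neq 0$ is automatic for such near-extremal $x$ once we know $\ffi>0$, so the first part is genuinely needed here.

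It then remains to pass to the limit. Given $\delta>0$, for all sufficiently small $\eta>0$ one has $\kappa\ep/(\ffi-\eta)\le\kappa\ep/\ffi+\delta$, hence $\ffi-\eta<\tfi_T(\kappa\ep/\ffi+\delta)$; letting $\eta\to 0$ gives $\ffi\le\tfi_T(\kappa\ep/\ffi+\delta)$, and then $\delta\to 0$ yields $\ffi\le\tfi_T(\kappa\ep/\ffi+0)$, which together with the trivial bound $0\le\ffi_T(\ep)$ is the assertion. The only point to keep in mind is that the argument $\kappa\ep/\ffi$ of $\tfi_T$ may exceed $1$, so $\tfi_T$ is to be read as the same supremum for every positive argument (it is then constant, equal to $\|I-T\|$, once the argument reaches $1$). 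I expect the small convex-combination construction in the positivity step to be the only genuinely non-routine ingredient; the remainder is bookkeeping with the nondecreasing functions $\ffi_T$ and $\tfi_T$.
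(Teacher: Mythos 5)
Your proof is correct and follows essentially the same route as the paper's: normalize a near-extremal vector $x$ to $z=x/\|x\|$, bound $1-\|Tz\|$ by $\ep/\|x\|$, use $\|x\|\ge\|x-Tx\|/\|I-T\|$ together with the monotonicity of $\tfi_T$, and pass to the limit to obtain the one-sided value $\tfi_T(\cdot+0)$. Only your positivity step is more elaborate than necessary: since every $x$ with $\|x\|\le\ep$ automatically satisfies $\|x\|-\|Tx\|\le\ep$, the assumption $\ffi_T(\ep)=0$ forces $Tx=x$ on a whole ball and hence $T=I$ by linearity, with no need for the convex-combination construction.
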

\begin{proof}
Assuming $\ffi_T(\varepsilon)=0$ for an $\ep$, we obtain $\|x - Tx\|=0$
for all $x$ with $\|x\|\le\ep$, so $T=I$. Now let $T\neq I$.
Take $q\in (0,1)$ and find a vector $x$ such that
$$\|x\| \le 1,\quad \|x\| - \|Tx\| \le \varepsilon,
\quad \|x - Tx\|=q\theta$$
where $\theta=\ffi_T(\varepsilon)>0$. Then for the normalized vector
$z=x/ \|x\|$ we have
$$1-\|Tz\|\le\frac{\varepsilon}{\|x\|},\quad\|z - Tz\|=\frac{q\theta}{\|x\|}\ge q\theta,$$
whence
$$\tfi_T\left(\frac{\varepsilon)}{\|x\|}\right)\ge q\theta.$$
On the other hand,
$$\tfi_T\left(\frac{\varepsilon}{\|x\|}\right)\le
\tfi_T\left(\frac{\|I-T\|\varepsilon}{q\theta}\right)$$
since $\|x\|\ge q\theta/\|I-T\|.$ Thus,
$$q\theta\le \tfi_T\left(\frac{\|I-T\|\varepsilon}{q\theta}\right).$$
It remains to subsitute $\theta$ by $\ffi_T(\varepsilon)$ and pass
to the limit as $q\to 1$ .
\end{proof}
\begin{corollary}
\label{cor:equ}
$\tfi_T^0=\ffi_T^0$ for all contractions $T$ of norm 1.
\end{corollary}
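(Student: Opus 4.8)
The plan is to read off one inequality from \eqref{eq:upp} and the reverse one from Lemma~\ref{lem:basin}, in each case passing to the limit as $\ep\to 0$. From \eqref{eq:upp} we have $\tfi_T(\ep)\le\ffi_T(\ep)$ for every $\ep\in(0,1]$, and letting $\ep\to 0$ gives $\tfi_T^0\le\ffi_T^0$ immediately; the whole content is therefore the opposite inequality $\ffi_T^0\le\tfi_T^0$.

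First I would dispose of the cases in which Lemma~\ref{lem:basin} is not needed. If $T=I$ then $\ffi_T\equiv 0\equiv\tfi_T$, so $\ffi_T^0=\tfi_T^0=0$; and if $T\neq I$ but $\ffi_T^0=0$, then the inequality just obtained forces $\tfi_T^0=0=\ffi_T^0$ as well. There remains the case $T\neq I$, $\|T\|=1$, $\ffi_T^0>0$. Here Lemma~\ref{lem:basin} applies and gives
$$\ffi_T(\ep)\le\tfi_T\left(\frac{\|I-T\|\,\ep}{\ffi_T(\ep)}+0\right)$$
for every $\ep$. Since $\ffi_T$ is nondecreasing, $\ffi_T(\ep)\ge\ffi_T^0>0$, so the argument of $\tfi_T$ on the right-hand side is at most $\|I-T\|\,\ep/\ffi_T^0$, which tends to $0$ as $\ep\to 0$. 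Fixing $\eta>0$ and using that $\tfi_T$ is nondecreasing (hence its right-hand limit at any point $a<\eta$ is $\le\tfi_T(\eta)$), we get $\ffi_T(\ep)\le\tfi_T(\eta)$ for all sufficiently small $\ep$; letting $\ep\to 0$ yields $\ffi_T^0\le\tfi_T(\eta)$, and then letting $\eta\to 0$ gives $\ffi_T^0\le\inf_{\eta>0}\tfi_T(\eta)=\tfi_T^0$. Combined with the first step, this proves $\ffi_T^0=\tfi_T^0$.

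I do not expect a genuine obstacle here: Lemma~\ref{lem:basin} does all the work, and what remains is only the bookkeeping with the right-hand limit $\tfi_T(\,\cdot+0)$ and with the order of the two limits ($\ep\to 0$ before $\eta\to 0$), together with the separation of the degenerate cases $T=I$ and $\ffi_T^0=0$.
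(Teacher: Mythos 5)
Your proof is correct and takes essentially the same route as the paper: the inequality $\tfi_T^0\le\ffi_T^0$ is read off from \eqref{eq:upp}, and the reverse one comes from Lemma~\ref{lem:basin} by letting $\ep\to 0$. Your separate treatment of the case $\ffi_T^0=0$ (needed to ensure the argument $\|I-T\|\ep/\ffi_T(\ep)$ of $\tfi_T$ actually tends to $0$) makes explicit a point the paper's one-line limit passage glosses over, but it is bookkeeping rather than a different idea.
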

\begin{proof}
Since $\tfi_I^0=\ffi_I^0=0$, one can assume $T\neq I$ and apply Lemma \ref{lem:basin}.
As $\ep\to 0$ we get $\ffi_T^0\le\tfi_T^0$. The opposite inequality is trival.
\end{proof}
From now on we denote by $\omega_T$ the common value of $\ffi_T^0$ and $\tfi_T^0$.
For instance, $\omega_I = 0$. Accordingly, \eqref{eq:upp} can be extended to
\begin{equation}
\label{eq:uppe}
0\le\omega_T\le\tfi_T(\varepsilon) \le \ffi_T(\varepsilon)\le\|I-T\|\le 2.
\end{equation}
%The case $\omega_T = 0$ is especially important. In this case the
%condition \eqref{W'} is fulfilled. Indeed, if $\|Tx\| = \|x\|$ then $\|x - Tx\|\le
%\varphi_T(\varepsilon)\|x\|$ for all $\varepsilon$, hence $Tx = x$.
\end{num}
\begin{theorem}
\label{thm:S}
$\omega_T= 0$ if and only if $T$ is of class \eqref{S}.
\end{theorem}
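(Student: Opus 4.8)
The statement to be proved is: $\omega_T = 0$ if and only if $T$ is of class \eqref{S}, where \eqref{S} is the implication ``$\|x_k\|\le 1$, $\|Tx_k\|\to 1 \Rightarrow x_k - Tx_k \to 0$ strongly.'' By Corollary \ref{cor:equ} we may work with whichever of $\ffi_T$, $\tfi_T$ is more convenient; note also that if $\|T\|<1$ then both \eqref{S} and $\omega_T = 0$ hold trivially (the boundary spectrum is empty, no normalized sequence has $\|Tx_k\|\to 1$, and $\tfi_T$ is not even defined — but one checks directly that $\ffi_T^0 = 0$ from $\ffi_T(\ep) = 0$ for $\ep < 1 - \|T\|$). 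So the substance is the case $\|T\| = 1$, and I would treat the case $T = I$ separately at the outset since there $\omega_I = 0$ and \eqref{S} holds vacuously-or-trivially as well. Assume henceforth $\|T\|=1$, $T \neq I$.

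The plan is to translate both sides into statements about $\tfi_T$ directly from the definitions. \emph{Suppose $\omega_T = 0$.} Let $(x_k)$ be a sequence with $\|x_k\|\le 1$ and $\|Tx_k\|\to 1$. Set $\ep_k = 1 - \|Tx_k\| \to 0^+$ (discard the finitely many $k$ with $\ep_k \le 0$, i.e. with $\|Tx_k\|=1$; actually one should handle those: if $\|Tx_k\|=1$ then since $\|x_k\|\le 1 = \|Tx_k\| \le \|T\|\,\|x_k\| \le \|x_k\|$ we get $\|x_k\|=1$ and $1-\|Tx_k\|=0$, so still $x_k$ is admissible for $\tfi_T(\delta)$ for every $\delta>0$). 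For each $k$, the vector $x_k$ is admissible in the supremum defining $\tfi_T(\ep_k)$ whenever $\ep_k > 0$ (and for every positive argument when $\ep_k = 0$), so $\|x_k - Tx_k\| \le \tfi_T(\max(\ep_k, 0))$ — more carefully, given any fixed $\delta>0$, for all large $k$ we have $\ep_k \le \delta$ hence $\|x_k - Tx_k\| \le \tfi_T(\delta)$; letting $\delta \to 0$ gives $\limsup_k \|x_k - Tx_k\| \le \tfi_T^0 = \omega_T = 0$. Hence $x_k - Tx_k \to 0$ strongly, which is exactly \eqref{S}.

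Conversely, \emph{suppose $T$ is of class \eqref{S}} and suppose for contradiction that $\omega_T = \tfi_T^0 > 0$. Then for every $\ep > 0$ we have $\tfi_T(\ep) \ge \omega_T > 0$, so by definition of the supremum there is a vector $x_\ep$ with $\|x_\ep\| \le 1$, $1 - \|Tx_\ep\| \le \ep$, and $\|x_\ep - Tx_\ep\| \ge \omega_T/2 > 0$. Taking $\ep = 1/k$ and setting $x_k := x_{1/k}$, we obtain a sequence with $\|x_k\| \le 1$ and $\|Tx_k\| \ge 1 - 1/k \to 1$ (so $\|Tx_k\| \to 1$ since also $\|Tx_k\| \le 1$), yet $\|x_k - Tx_k\| \ge \omega_T / 2$ does not tend to $0$. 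This contradicts \eqref{S}. Therefore $\omega_T = 0$.

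I do not expect any real obstacle here: the whole argument is a direct unwinding of the two definitions against each other, the only points requiring a little care being (a) the degenerate cases $T = I$ and $\|T\| < 1$, and (b) the boundary subtlety when $\|Tx_k\| = 1$ exactly, i.e. when $1 - \|Tx_k\|$ is not strictly positive — in that case one checks $\|x_k\| = 1$ and the vector is admissible for $\tfi_T$ at every positive argument, so the estimate still goes through. Both are routine.
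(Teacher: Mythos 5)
Your proof is correct and follows essentially the same route as the paper: the forward direction ($\omega_T=0\Rightarrow$ \eqref{S}) by observing that a sequence with $\|Tx_k\|\to 1$ is eventually admissible in the supremum defining $\tfi_T(\delta)$ for any fixed $\delta>0$, and the converse by extracting near-suprema $x_k$ for $\tfi_T(1/k)$ and applying \eqref{S} to them. Your treatment of the boundary case $\|Tx_k\|=1$ (direct admissibility at every positive argument, versus the paper's rescaling by $q_k\to 1$) is an equally valid minor variant.
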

\begin{proof}
"If". There is a sequence of vectors $x_k$ such that
$\|x_k\|\le 1$, $1-\|Tx_k\|\le 1/k$ and
$\tfi_T(1/k)<2\|x_k - Tx_k\|$. The latter norm tends to zero
if $T$ satisfies conditon \eqref{S}.

"Only if". Let $\|x_k\|\le 1$ and $\|Tx_k\| \to 1$. Without loss
of generality one can assume $\|Tx_k\|<1$, otherwise, we change $x_k$ to
$q_kx_k$ where all $q_k\in (0,1)$ and $q_k\to 1$ as $k\to\infty$.
Since $\omega_T = 0$ we have
$\tfi_T(1-\|Tx_k\|)\to 0$, whence $\|x_k - Tx_k\|\to 0$ by the
the obvious inequality
$$\|x\| - \|Tx\| \le \tfi_T(1-\|Tx\|)\quad (\|x\|\le 1,\quad \|Tx\|<1).$$
\end{proof}
\begin{remark}
Theorem \ref{thm:S} remains in force for $\|T\|<1$ if we set $\omega_T=0$
in this case. The latter definition is natural. Indeed, if $\|T\|<1$ then
$$\ffi_T(\ep)\le \frac{\|I-T\|\ep}{1-\|T\|},$$
whence $\ffi_T^0=0$. (Recall that $\tfi_T^0$ is not defined for $\|T\|<1$.)
\end{remark}
\begin{remark}
Let $T$ be an isometry. Then $\ffi_T(\ep)=\|I-T\|$ for all $\ep$, hence
$\omega_T= \|I-T\|$, therefore, $\omega_T>0$ if $T\neq I$.
\end{remark}
\begin{num}
{\bf The Apostol modulus for orthoprojections.}
\label{num:214}
If $P$ is an orthoprojection, so that $\|P\|\le 1$, then
\begin{equation}
\label{eq:opo}
\|Px\|=\frac{1}{2}\|P(x+Px)\|\le \frac{1}{2}\|x+Px\|\le\|x\|
\end{equation}
Now let $\|x\| \le 1$, and let $1-\|Px\| \le \ep$. Then $\|Px\|\le 1$ and
$\frac{1}{2}\|x+Px\|\ge 1-\ep$. Hence, $\|x-Px\| \le \beta_X(\ep)$ where
$$\beta_X(\ep)=\sup\{\|x-y\|:\|x\|\le 1, \|y\|\le 1,\frac{\|x+y\|}{2}\ge 1-\ep\} .$$
This results in the inequality
\begin{equation}
\label{eq:amo}
\tfi_P(\ep)\le\beta_X(\ep).
\end{equation}

The function $\beta_X$ was introduced and investigated in \cite{BHW}.
It is closely related to the modulus of convexity. In particular,
$\lim_{\ep\to 0}\beta_X(\ep)=0$ if the space $X$ is uniformly convex,
otherwise, this limit is the supremum of those $\ep$ for which $\delta_X(\ep)=0$.
The latter quantity (or 0 if $X$ is uniformly convex) is called the
\emph{characteristic of convexity} of the space $X$, see \cite{GR}.
\end{num}
\begin{proposition}
\label{prop:usom}
If $P$ is an orthoprojection in a uniformly convex space then $\omega_P=0$.
\end{proposition}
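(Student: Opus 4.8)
The plan is to obtain the conclusion directly from inequality \eqref{eq:amo} together with the behaviour of $\beta_X$ near $0$ recalled in \ref{num:214}. First I would dispose of the degenerate case $P=0$: then $\|P\|<1$, so by the convention adopted in the Remark following Theorem \ref{thm:S} we have $\omega_P=0$, and nothing needs to be proved (also, $\tfi_P$ is not even defined in that case). So from now on assume $P\neq 0$; being an orthoprojection, $P$ then satisfies $\|P\|=1$, and $\tfi_P(\ep)$ makes sense for all $\ep\in(0,1]$.

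The main step is then a one-line limiting argument. By \eqref{eq:amo} we have $\tfi_P(\ep)\le\beta_X(\ep)$ for every $\ep\in(0,1]$. Since $X$ is uniformly convex, $\delta_X(\ep)>0$ for all $\ep>0$, and hence $\lim_{\ep\to 0}\beta_X(\ep)=0$ (this is precisely the fact recorded in \ref{num:214}, from \cite{BHW}). Therefore
$$0\le\omega_P=\tfi_P^0=\lim_{\ep\to 0}\tfi_P(\ep)\le\lim_{\ep\to 0}\beta_X(\ep)=0,$$
so $\omega_P=0$, as claimed. One could equally compute $\ffi_P^0$, which equals $\omega_P$ by Corollary \ref{cor:equ}, but working with $\tfi_P$ is the shortest route since \eqref{eq:amo} is stated for $\tfi_P$.

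I do not expect a genuine obstacle here: the work has already been done in establishing \eqref{eq:amo} and in the analysis of $\beta_X$. The only points that need a little care are the bookkeeping for the trivial case $P=0$ and, if one wishes to be self-contained about the step $\beta_X(\ep)\to 0$, a direct verification that uniform convexity forces this — which is immediate from the definition of $\delta_X$: if $\beta_X(\ep_k)\not\to 0$ one extracts unit vectors $x_k,y_k$ with $\|x_k+y_k\|/2\to 1$ yet $\|x_k-y_k\|\ge\ep_0>0$ for some fixed $\ep_0>0$, contradicting $\delta_X(\ep_0)>0$.
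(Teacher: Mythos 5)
Your proof is correct and is essentially the paper's own argument: combine the inequality \eqref{eq:amo} with the fact that $\beta_X(\ep)\to 0$ as $\ep\to 0$ in a uniformly convex space and pass to the limit. The extra bookkeeping for $P=0$ is fine but not needed beyond a remark.
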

\begin{proof}
This follows from the inequality \eqref{eq:amo} by passing to the limit as $\ep\to 0$.
\end{proof}
\begin{corollary}
\label{cor:us}
Every orthoprojection in a uniformly convex space is of class \eqref{S}.
\end{corollary}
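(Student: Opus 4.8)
The plan is to deduce Corollary~\ref{cor:us} from Proposition~\ref{prop:usom} together with Theorem~\ref{thm:S}, which establishes the equivalence between the vanishing of the Apostol modulus $\omega_T$ and membership in the class \eqref{S}. First I would recall that by definition every orthoprojection $P$ satisfies $\|P\|\le 1$; if $P=0$ or, more generally, $\|P\|<1$ the statement is vacuous (the class \eqref{S} is formally fulfilled but empty in content, as noted after \eqref{W}), so the only case of substance is $\|P\|=1$, which is exactly the setting in which $\tfi_P(\ep)$ and hence $\omega_P$ are defined.

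Assuming $X$ is uniformly convex, Proposition~\ref{prop:usom} gives $\omega_P=0$. Applying Theorem~\ref{thm:S} to the contraction $T=P$ then yields immediately that $P$ is of class \eqref{S}. That is the entire argument: Corollary~\ref{cor:us} is a formal consequence of the two preceding results, with no further estimates needed.

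There is essentially no obstacle here; the work was already done in establishing \eqref{eq:amo}, namely $\tfi_P(\ep)\le\beta_X(\ep)$, and in the fact (recalled in \S\ref{num:214}) that $\lim_{\ep\to 0}\beta_X(\ep)=0$ precisely when $X$ is uniformly convex. The only point requiring a word of care is the degenerate case $\|P\|<1$, which one disposes of by the remark following Theorem~\ref{thm:S}: there $\ffi_P^0=0$ by the bound $\ffi_P(\ep)\le\|I-P\|\ep/(1-\|P\|)$, so setting $\omega_P=0$ in that case is consistent and the conclusion still holds trivially.
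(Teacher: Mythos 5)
Your proposal is correct and follows exactly the paper's route: Corollary~\ref{cor:us} is obtained by combining Proposition~\ref{prop:usom} ($\omega_P=0$ for an orthoprojection in a uniformly convex space) with the equivalence of Theorem~\ref{thm:S}, and your treatment of the degenerate case $\|P\|<1$ (i.e.\ $P=0$) matches the convention in the remark following that theorem. Nothing further is needed.
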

\begin{remark}
This corollary can be obtained directly from \eqref{eq:opo}.
In this way Proposition \ref{prop:usom} follows from Theorem \ref{thm:S}.
\end{remark}

The uniform convexity of $X$ is not necessary for the existence of
\eqref{S}-orthoprojections. For instance, if a projection $P$
in $X$ is such that $\|x\| = \|Px\| + \|x-Px\|$
for all $x\in X$ (an $L$-\emph{projection} \cite{HWW})
then $P$ is an orthoprojection and $\omega_P=0$. Indeed, either
$P=I$ or $\ffi_P(\ep)=\ep$ for all $\ep$.
In this situation $X$ may not be uniformly convex. An example is $X=\ell^1$
where any coordinate projection is an $L$-projection.
\begin{remark}
\label{rem:strco}
From \eqref{eq:opo} it follows that \emph{every orthoprojection in a strictly
convex space is of class \eqref{W'}}.
\end{remark}

\section{Structure properties of classes \eqref{H}, \eqref{S} and \eqref{D}}

In this section we prove the following theorem.
\begin{theorem}
\label{thm:HSD}
In any Banach space the sets of contractions of classes \eqref{H}, \eqref{S} and \eqref{D}
are multiplicative semigroups. In addition, they are convex in the cases \eqref{S} and
\eqref{D}.
\end{theorem}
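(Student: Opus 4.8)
The plan is to handle the three classes separately, each time exploiting the appropriate characterizations established earlier. For the semigroup property it suffices to prove that the product $ST$ of two contractions in a given class stays in that class; for convexity it suffices to treat a convex combination $\alpha S + (1-\alpha)T$ with $\alpha\in(0,1)$. I will treat \eqref{D} first since it is the most elementary: if $\|S - rI\|\le 1-r$ and $\|T - sI\|\le 1-s$ with $r,s\in(0,1)$, I would look for $t\in(0,1)$ with $\|ST - tI\|\le 1-t$. Writing $ST - tI = (S-rI)(T-sI) + r(T-sI) + s(S-rI) + (rs - t)I$ and choosing $t$ so that the scalar term is absorbed, a natural guess is $t = rs$, giving $\|ST - rsI\| \le (1-r)(1-s) + r(1-s) + s(1-r) = 1 - rs$, exactly what is needed. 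For convexity of \eqref{D}, with $U = \alpha S + (1-\alpha)T$ one computes $U - tI = \alpha(S-rI) + (1-\alpha)(T-sI) + (\alpha r + (1-\alpha)s - t)I$ and picks $t = \alpha r + (1-\alpha)s$, yielding $\|U - tI\|\le \alpha(1-r) + (1-\alpha)(1-s) = 1 - t$.

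For class \eqref{S}, the efficient route is via the characterization $\omega_T = 0$ from Theorem~\ref{thm:S} together with the Apostol modulus machinery of Section~2. So I would show: if $\omega_S = 0$ and $\omega_T = 0$ then $\omega_{ST} = 0$ and $\omega_{\alpha S + (1-\alpha)T} = 0$. The key is to bound $\tfi_{ST}(\ep)$ and $\tfi_{\alpha S+(1-\alpha)T}(\ep)$ in terms of $\tfi_S$ and $\tfi_T$ at arguments that still go to $0$ with $\ep$. For the product: given $\|x\|\le 1$ with $1 - \|STx\|\le\ep$, I first observe $\|Tx\|\le 1$ and $\|STx\| \le \|Tx\| \le 1$, so $1 - \|Tx\|\le\ep$ and hence $\|x - Tx\|\le\tfi_T(\ep)$ (after normalizing $Tx$, or rather using $\tfi_T$ on $x$ directly); then, applying the modulus of $S$ to the normalized vector $Tx/\|Tx\|$, since $1 - \|S(Tx/\|Tx\|)\| \le (1-\|STx\|)/\|Tx\|$ is small, one gets $\|Tx - STx\|$ small; adding up gives $\|x - STx\| \le \|x - Tx\| + \|Tx - STx\|$ controlled by a quantity vanishing as $\ep\to 0$. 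The convex combination is handled similarly: if $\|Ux\|$ is near $1$ with $U = \alpha S + (1-\alpha)T$, then by the triangle inequality and $\|Sx\|,\|Tx\|\le 1$ both $\|Sx\|$ and $\|Tx\|$ must be near $1$ (a convex combination of two numbers in $[0,1]$ close to $1$ forces both close to $1$), whence $\|x - Sx\|$ and $\|x - Tx\|$ are small, and $\|x - Ux\| \le \alpha\|x - Sx\| + (1-\alpha)\|x - Tx\|$ is small.

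For class \eqref{H}, I would argue directly with the quadratic inequality $\|x - Tx\|^2 \le K(\|x\|^2 - \|Tx\|^2)$. For the product $ST$ with constants $K(S), K(T)$: using $\|x - STx\| \le \|x - Tx\| + \|Tx - STx\|$, the square is at most $2\|x-Tx\|^2 + 2\|Tx - STx\|^2 \le 2K(T)(\|x\|^2 - \|Tx\|^2) + 2K(S)(\|Tx\|^2 - \|STx\|^2)$, and since each contraction makes the norms decrease, this telescopes into $2\max(K(S),K(T))(\|x\|^2 - \|STx\|^2)$, so \eqref{H} holds with $K(ST)\le 2\max(K(S),K(T))$. (The factor $2$ is harmless for membership.) I do not expect \eqref{H} to be convex in general, which matches the theorem statement asserting convexity only for \eqref{S} and \eqref{D}, so no convexity claim is made there.

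The main obstacle, as I see it, is the product case for \eqref{S}: one must be careful that the arguments fed into $\tfi_S$ and $\tfi_T$ genuinely tend to $0$, which requires the intermediate norms $\|Tx\|$ to stay bounded away from $0$ — this is automatic once $\|STx\|$ is near $1$, but the bookkeeping with normalizations (dividing by $\|Tx\|$, handling the edge case $\|Tx\| = 0$ or $\|STx\| = 1$ exactly as in the proof of Theorem~\ref{thm:S}) is where the care goes. Everything else is a routine triangle-inequality-and-telescoping computation.
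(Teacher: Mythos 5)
Your proposal is correct and takes essentially the same route as the paper: class \eqref{H} is handled by squaring the triangle inequality and telescoping to get $K(AB)\le 2\max(K(A),K(B))$, class \eqref{S} by reducing to $\omega_T=0$ via Theorem~\ref{thm:S} and bounding $\tfi$ of a product or convex combination by the $\tfi$'s of the factors, and class \eqref{D} by direct norm estimates showing $rs$ and $\alpha r+\beta s$ are admissible parameters. The only cosmetic differences are that the paper avoids your normalization of $Tx$ in the \eqref{S}-product case by writing $\|Ax-ABx\|=\|A(x-Bx)\|\le\|x-Bx\|$ so that both moduli are applied to $x$ itself, and uses the shorter identity $AB-rsI=A(B-sI)+s(A-rI)$ for \eqref{D}.
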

This theorem is an immediate consequence of the lemmas proven below.
\begin{lemma}
\label{lem:Halperin}
Let $A$ and $B$ be two contractions satisfying condition \eqref{H}.
Then the product $AB$ also satisfies \eqref{H} and
$$K(AB)\le 2\max (K(A),K(B)).$$
\end{lemma}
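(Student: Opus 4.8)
The plan is to estimate $\|x - ABx\|^2$ by splitting the displacement through the intermediate vector $Bx$, namely $x - ABx = (x - Bx) + (Bx - A(Bx))$, and then to apply condition \eqref{H} to each summand. Writing $K_A = K(A)$, $K_B = K(B)$ and $K = \max(K_A,K_B)$, condition \eqref{H} for $B$ gives $\|x - Bx\|^2 \le K_B(\|x\|^2 - \|Bx\|^2)$, and condition \eqref{H} for $A$ applied to the vector $Bx$ gives $\|Bx - A(Bx)\|^2 \le K_A(\|Bx\|^2 - \|ABx\|^2)$. Using the elementary inequality $(a+b)^2 \le 2a^2 + 2b^2$ on the triangle-inequality bound for $\|x - ABx\|$, I get
\[
\|x - ABx\|^2 \le 2\|x - Bx\|^2 + 2\|Bx - ABx\|^2
\le 2K_B(\|x\|^2 - \|Bx\|^2) + 2K_A(\|Bx\|^2 - \|ABx\|^2).
\]
Here the two "inner" terms $-2K_B\|Bx\|^2$ and $+2K_A\|Bx\|^2$ do not cancel exactly unless $K_A = K_B$, so the next step is to dominate the right-hand side by a telescoping expression in $\|x\|^2$ and $\|ABx\|^2$ alone. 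Replacing both $K_A$ and $K_B$ by the common bound $K$ makes the middle terms cancel:
\[
2K_B(\|x\|^2 - \|Bx\|^2) + 2K_A(\|Bx\|^2 - \|ABx\|^2)
\le 2K(\|x\|^2 - \|Bx\|^2) + 2K(\|Bx\|^2 - \|ABx\|^2)
= 2K(\|x\|^2 - \|ABx\|^2),
\]
where the inequality uses that both $\|x\|^2 - \|Bx\|^2 \ge 0$ and $\|Bx\|^2 - \|ABx\|^2 \ge 0$, which hold because $B$ and $A$ are contractions (indeed every \eqref{H}-contraction is a contraction, as noted in the excerpt). This yields $\|x - ABx\|^2 \le 2K(\|x\|^2 - \|ABx\|^2)$ for all $x$, hence $AB$ satisfies \eqref{H} with the claimed constant $K(AB) \le 2\max(K(A),K(B))$; that $AB$ is again a contraction is automatic once \eqref{H} holds.

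The only genuine subtlety is the sign discipline in the second step: the bound $(a+b)^2 \le 2a^2+2b^2$ introduces a factor $2$, and the freedom to enlarge $K_A,K_B$ to $K$ is legitimate precisely because the coefficients multiply nonnegative quantities. I would state explicitly, before the chain of inequalities, that $\|x\| \ge \|Bx\| \ge \|ABx\|$, since this monotonicity of norms along the orbit is exactly what licenses both the telescoping and the replacement of the constants. No appeal to the geometry of $X$ (convexity, smoothness, reflexivity) is needed here — the argument is purely algebraic and works in any Banach space, which is consistent with the statement of Theorem \ref{thm:HSD}.
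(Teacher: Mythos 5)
Your proof is correct and is essentially identical to the paper's: the same decomposition through $Bx$, the same use of $(a+b)^2\le 2a^2+2b^2$, and the same replacement of $K(A),K(B)$ by their maximum to telescope. Your explicit remark that the telescoping is licensed by $\|x\|\ge\|Bx\|\ge\|ABx\|$ is a detail the paper leaves implicit, but the argument is the same.
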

\begin{proof}
We have
$$\|x - ABx\|^2 \le (\|x - Bx\| + \|Bx - ABx\|)^2
  \le 2(\|x - Bx\|^2 + \|Bx - ABx\|^2),$$
whence

\begin{align*}
\|x - ABx\|^2 &\le 2K(B)(\|x\|^2 - \|Bx\|^2) + 2K(A)(\|Bx\|^2 - \|ABx\|^2)\\
 &\le 2\max (K(A),K(B))(\|x\|^2 - \|ABx\|^2) .
\end{align*}
\end{proof}
Thus, {\em the set of \eqref{H}-contractions is a multiplicative semigroup.}
\begin{remark}
If $T$ is an \eqref{H}-contraction then
$$\ffi_T(\ep) \le \sqrt{2K(T)\ep}.$$
Indeed, if $\|x\| \le 1$ and $\|x\| -\|Tx\| \le \ep$, then
$$\|x-Tx\|^2 \le K(T)\left(\|x\|^2 - \|Tx\|^2\right)
\le 2K(T)(\|x\| -\|Tx\|) \le 2K(T)\ep.$$

In particular, if $P$ is an orthoprojection in a Hilbert space $H$
then $$\|x-Px\|^2 = \|x\|^2-\|Px\|^2.$$
Thus, $P$ satisfies \eqref{H} with constant $K(P)=1$. Hence, $\ffi_P(\ep) \le \sqrt{2\ep}$.
\end{remark}
\begin{lemma}
\label{lem:22}
\begin{itemize}
\item[(i)] Let $A$ and $B$ be some contractions of norm 1. Then either $\|AB\|<1$ or
$$ \tfi_{AB}(\varepsilon) \le
\tfi_{A}(\tfi_{B}(\varepsilon)+ \varepsilon) + \tfi_{B}(\varepsilon) .$$
\item[(ii)] Let
$$T= \sum_{k=1}^N \alpha_kA_k$$
be a convex combination of contractions $A_k$ of norm 1, and let all $\alpha_k > 0$.
Then either $\|T\|<1$ or
$$ \tfi_{T}(\varepsilon) \le \sum_{k=1}^N\alpha_k\tfi_{A_k}(\alpha_k^{-1}\ep).$$
\end{itemize}
\end{lemma}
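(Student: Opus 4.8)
The plan is to estimate $\|x - Tx\|$ for a normalized vector $x$ with $1 - \|Tx\|$ small by inserting intermediate terms and applying the definition of $\tfi$ to the constituent operators. I would treat the two parts in parallel, since part (i) is essentially the two-factor case of the argument behind part (ii) (written multiplicatively rather than as a convex sum).

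For part (i): suppose $\|AB\| = 1$ and take $x$ with $\|x\| \le 1$ and $1 - \|ABx\| \le \ep$. Write $x - ABx = (x - Bx) + (Bx - ABx)$ and bound each piece. For the first piece, note $\|Bx\| \ge \|ABx\| \ge 1 - \ep$, so $1 - \|Bx\| \le \ep$ and hence $\|x - Bx\| \le \tfi_B(\ep)$. For the second piece, set $y = Bx$; then $\|y\| \le 1$ and $1 - \|Ay\| = 1 - \|ABx\| \le \ep$, so na\"ively $\|y - Ay\| \le \tfi_A(\ep)$, which already gives a bound $\tfi_A(\ep) + \tfi_B(\ep)$ — slightly \emph{stronger} than what is claimed. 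The subtlety is that the claimed estimate reads $\tfi_A(\tfi_B(\ep) + \ep)$, not $\tfi_A(\ep)$; I expect the intended argument is the one that generalizes cleanly, namely to track $\|x\| - \|Bx\|$ rather than $1 - \|Bx\|$ and use the full power of $\tfi$ in a chained fashion. Concretely: $\|Bx\| \ge \|ABx\|$ need not hold with the right slack after normalization, so one replaces the crude step by observing that if $z = Bx/\|Bx\|$ then $1 - \|Az\| \le (1-\|ABx\|)/\|Bx\| + $ (a term controlled by $1 - \|Bx\| \le \tfi_B(\ep)$ via \eqref{eq:opo}-type reasoning is not available here, so instead) one uses $1 - \|Bx\| \le \ep$ together with $\|x - Bx\| \le \tfi_B(\ep)$ to get $1 - \|A(Bx)\|\bigm/\|Bx\| \le \ep + \tfi_B(\ep)$ after accounting for the normalization, and then $\|Bx - ABx\| = \|Bx\|\cdot\|z - Az\| \le \tfi_A(\tfi_B(\ep) + \ep)$. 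Summing the two pieces yields the stated inequality. The point of phrasing it this way is uniformity with part (ii).

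For part (ii): assume $\|T\| = 1$ and take $x$ with $\|x\| \le 1$, $1 - \|Tx\| \le \ep$. Write
\[
x - Tx = \sum_{k=1}^N \alpha_k (x - A_k x),
\]
so $\|x - Tx\| \le \sum_k \alpha_k \|x - A_k x\|$. The key estimate is that $1 - \|A_k x\| \le \alpha_k^{-1}\ep$ for each $k$: indeed $\|Tx\| = \|\sum_j \alpha_j A_j x\| \le \sum_j \alpha_j \|A_j x\| \le \alpha_k \|A_k x\| + (1 - \alpha_k)$, so $\alpha_k \|A_k x\| \ge \|Tx\| - (1-\alpha_k) \ge \alpha_k - \ep$, i.e. $1 - \|A_k x\| \le \alpha_k^{-1}\ep$. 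Hence $\|x - A_k x\| \le \tfi_{A_k}(\alpha_k^{-1}\ep)$, and summing with weights $\alpha_k$ gives the claim. The main obstacle is making the normalization in part (i) precise enough to land exactly on $\tfi_A(\tfi_B(\ep) + \ep)$ — the convex-combination estimate in (ii), by contrast, is entirely elementary once the inequality $1 - \|A_k x\| \le \alpha_k^{-1}\ep$ is in hand, which is the crucial observation there.
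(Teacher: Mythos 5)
Your part (ii) is correct and is essentially the paper's own argument: the only real content is the observation that $1-\|A_kx\|\le\alpha_k^{-1}\ep$, which the paper obtains from $\sum_k\alpha_k(1-\|A_kx\|)\le 1-\|Tx\|\le\ep$ and you obtain by isolating the $k$-th term; these are the same computation.

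For part (i) your first, ``na\"ive'' computation is already a complete proof, and of a stronger statement: with $y=Bx$ you have $\|y\|\le 1$ and $1-\|Ay\|=1-\|ABx\|\le\ep$, so $\|Bx-ABx\|\le\tfi_A(\ep)$ directly from the definition (which requires only $\|y\|\le 1$, not $\|y\|=1$), and together with $\|x-Bx\|\le\tfi_B(\ep)$ this gives $\tfi_{AB}(\ep)\le\tfi_A(\ep)+\tfi_B(\ep)$, which implies the stated bound since $\tfi_A$ is nondecreasing. There is no subtlety to repair, and the subsequent paragraph about normalizing $z=Bx/\|Bx\|$ should simply be deleted: it is not needed, and as written it does not cohere (note that $1-\|ABx\|/\|Bx\|\le 1-\|ABx\|\le\ep$ because $\|Bx\|\le 1$, so even after normalization you would land on $\tfi_A(\ep)$, not on $\tfi_A(\tfi_B(\ep)+\ep)$). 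The reason the paper's bound has the more complicated form is that the paper uses the other decomposition, $x-ABx=(x-Ax)+A(x-Bx)$, bounding the second term by $\|x-Bx\|\le\tfi_B(\ep)$ and the first by $\tfi_A(1-\|Ax\|)$; estimating $1-\|Ax\|\le\|x-Bx\|+(1-\|ABx\|)\le\tfi_B(\ep)+\ep$ is what produces the nested argument. Your decomposition $x-ABx=(x-Bx)+(I-A)Bx$ avoids that step entirely and yields a cleaner, slightly sharper inequality; for the intended application (that $\omega_A=\omega_B=0$ implies $\omega_{AB}=0$) the two are equally good.
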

\begin{proof}
(i). Let $\|AB\|=1$. Then $\|A\|=\|B\|=1$, so the functions $\tfi_{A}$, $\tfi_{B}$
are well defined along with  $\tfi_{AB}$. Take any  vector $x$ such that
$\|x\| \le 1$ and $1-\|ABx\| \le \varepsilon$. Then
$$ \|x-ABx\| \le \|x-Ax\|+\|Ax-ABx\| \le \|x-Ax\|+\|x-Bx\|.$$
Thus,
$$\|x-ABx\|\le \tfi_A (1 - \|Ax\|) + \tfi_B (1 - \|Bx\|).$$
Let us estimate $1 - \|Ax\|$ and $1 - \|Bx\|$. We have
$$ 1 - \|Bx\| \le 1 - \|ABx\| \le \varepsilon $$
and then
 $$ 1 - \|Ax\| \le 1 + \|A(x - Bx)\| - \|ABx\|\le \|x - Bx\| + (1 - \|ABx\|).$$
Thus,
 $$ 1 - \|Ax\| \le\tfi_{B}(\varepsilon)+ \varepsilon.$$
As a result,
$$\|x-ABx\|\le \tfi_A (\tfi_{B}(\varepsilon)+ \varepsilon) + \tfi_B (\varepsilon).$$

(ii). Let $\|T\|=1$. Then all $\|A_k\|=1$, so the functions $\tfi_{A_k}$
are well defined along with  $\tfi_{T}.$  Take $x$ such that
$\|x\| \le 1$, $1-\|Tx\| \le \varepsilon$, i.e.
$$1 - \|\sum_{k=1}^N\alpha_kA_kx\|\le \varepsilon.$$
A fortiori,
$$\sum_{k=1}^N\alpha_k( 1-\|A_kx\|)\le \varepsilon,$$
whence $1 - \|A_kx\| \le \alpha_k^{-1}\ep$ for every $k$.
Hence,
$$\|x - Tx\|\le\sum_{k=1}^N \alpha_k\|x-A_kx\|
\le\sum_k\alpha_k \tfi(\alpha_k^{-1}\ep).$$
\end{proof}
As a consequence, if $\omega_A = \omega_B = 0$ then $\omega_{AB} = 0$,
and if all $\omega_{A_k} =0$ then $\omega_{T} = 0$. By Theorem \ref{thm:S}
{\em the set of \eqref{S}-contractions is a convex multiplicative semigroup}.

Now for a contraction $T$ we consider the set
$$R(T)=\{r\in (0,1):\|T - rI\| \le 1-r\}.$$
By definition, $T$ is a \eqref{D}-contraction if and only if $R(T)\neq\emptyset$.
\begin{lemma}
\label{lem:D}
For any contractions $A$ and $B$ if $r\in R(A)$ and $s\in R(B)$ then $rs\in R(AB)$
and $\alpha r + \beta s\in R(\alpha A + \beta B)$ with $\alpha>0$, $\beta>0$ and
$\alpha+\beta = 1$.
\end{lemma}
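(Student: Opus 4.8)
The plan is to verify the two inclusions directly from the defining inequality of $R(\cdot)$, exploiting that the quantities involved behave well under multiplication and convex combination. For the product, suppose $r\in R(A)$ and $s\in R(B)$, so $\|A-rI\|\le 1-r$ and $\|B-sI\|\le 1-s$; both $A$ and $B$ are contractions. I would write
$$AB - rsI = A(B-sI) + s(A-rI)$$
and estimate the norm by $\|A\|\,\|B-sI\| + s\|A-rI\| \le (1-s) + s(1-r) = 1-rs$, using $\|A\|\le 1$. This gives $rs\in R(AB)$ immediately; note $rs\in(0,1)$ since $r,s\in(0,1)$.

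For the convex combination, with $\alpha,\beta>0$, $\alpha+\beta=1$, I would use the analogous algebraic identity
$$(\alpha A+\beta B) - (\alpha r+\beta s)I = \alpha(A-rI) + \beta(B-sI),$$
whence by the triangle inequality the norm is at most $\alpha(1-r)+\beta(1-s) = 1 - (\alpha r+\beta s)$. Again $\alpha r+\beta s\in(0,1)$ as a convex combination of two numbers in $(0,1)$, so $\alpha r+\beta s\in R(\alpha A+\beta B)$. This also confirms that $\alpha A+\beta B$ and $AB$ are themselves contractions, which is part of what being a \eqref{D}-contraction requires, though that already follows from the computed bound together with the remark after \eqref{D} in the introduction.

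I do not anticipate a serious obstacle here; the only point requiring a moment's care is the choice of the decomposition $AB-rsI = A(B-sI)+s(A-rI)$ rather than the symmetric-looking but non-working alternative $(A-rI)(B-sI)+\cdots$, and checking that the factor pulled out in front is the one whose operator norm we control (namely $\|A\|\le1$), so that the cross term does not pick up an extra constant. Once the two norm estimates are in place, the lemma — and hence the convexity and semigroup assertions for class \eqref{D} in Theorem \ref{thm:HSD} — follows at once, since $R(AB)$ and $R(\alpha A+\beta B)$ are then seen to be nonempty whenever $R(A)$ and $R(B)$ are.
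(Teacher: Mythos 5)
Your proof is correct and uses exactly the same decomposition $AB-rsI=A(B-sI)+s(A-rI)$ and the same triangle-inequality estimates as the paper's own argument. No differences worth noting.
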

\begin{proof}
First, we have
\begin{align*}
\|AB - rsI\| &= \|A(B-sI) + s(A - rI)\|\\
 &\le \|B -sI\| + s\|A -rI\|\le 1-rs.
\end{align*}
Secondly,
\begin{align*}
\|(\alpha A + \beta B) - (\alpha r + \beta s)I\|&\le \alpha \|A -rI\| +
\beta \|B -sI\|\\
&\le \alpha (1-r) + \beta (1-s)=1-(\alpha r + \beta s). 
\end{align*}
\end{proof}
Thus, {\em the set of \eqref{D}-contractions is a convex multiplicative semigroup.}
The proof of Theorem \ref{thm:HSD} is complete.

\section{Proof of the Main Theorem}

The following general result is a key lemma in the proof of our Main Theorem.
\begin{theorem}
\label{thm:conv}
If $X$ is a reflexive space and $T$ is a primitive contraction in $X$ then the
iterates $T^n$ converge strongly. The limit operator $T^{\infty}$ coincides with
the orthoprojection $E_T$ onto the subspace $L=\Ker(I-T)$ along the closure
$M=\overline{\Ran(I-T)}$. The convergence is uniform if and only if $\Ran(I-T)$
is closed.
\end{theorem}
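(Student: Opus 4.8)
The plan is to reduce everything to the already-established dichotomy ``the boundary spectrum is empty $\Leftrightarrow$ $\|T^n\|\to 0$'' versus ``the boundary spectrum is exactly $\{1\}$,'' and then to identify the limit operator concretely. First I would dispose of the trivial case where some power $T^n$ is a strict contraction: then $\|T^n\|\to 0$, so $T^n\to 0$ strongly, $L=\Ker(I-T)=\{0\}$, $M=\overline{\Ran(I-T)}=X$ (since $I-T$ is invertible, being $I$ minus something of spectral radius $<1$ — actually here $1\notin\sigma(T)$ suffices), and $E_T=0$; all assertions hold with uniform convergence and $\Ran(I-T)=X$ closed. So from now on assume $1\in\sigma(T)$, hence $\sigma(T)\cap\partial\D=\{1\}$ by primitivity.

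The main work is the convergent case. I would invoke the Katznelson–Tzafriri theorem (quoted in the excerpt): for a primitive contraction, $\|T^n-T^{n+1}\|\to 0$. Since $X$ is reflexive, I then use result (1) from the excerpt: a contraction with at most countable boundary spectrum on a reflexive space is almost periodic, so every orbit $(T^nx)_{n\ge 0}$ is precompact. Fix $x\in X$. Precompactness means every subsequence of $(T^nx)$ has a convergent sub-subsequence; I claim all such limits coincide and equal a fixed vector $E_Tx$. Indeed if $T^{n_j}x\to y$, then applying $\|T^n-T^{n+1}\|\to 0$ gives $T^{n_j+1}x\to y$ as well, and more generally $T^{n_j+p}x\to y$ for each fixed $p$ (telescoping $T^{n_j}-T^{n_j+p}$ as a sum of $p$ terms each tending to $0$ in norm, applied to the bounded vector $x$). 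Continuity of $T$ on the limit gives $Ty=y$, so $y\in L=\Ker(I-T)$. Uniqueness of the subsequential limit: if $T^{m_k}x\to z$ with $z\in L$, then $T^{m_k}(y)=y$ for all $k$ since $y\in L$, while also one can compare $T^{m_k}x$ against $T^{m_k}$ applied along the other sequence; the clean argument is that $y-z=\lim_j T^{n_j}x-\lim_k T^{m_k}x$, and by interleaving the two sequences and using that the interleaved sequence still has all its limit points in $L$ and that consecutive-power differences vanish, one forces $y=z$. (Alternatively: the decomposition $x=u+v$ with $u\in L$, $v\in M$ holds because $\sigma(T)\cap\partial\D=\{1\}$ lets one split $X=L\oplus M$; then $T^nu=u$ and $T^nv\to 0$.) So $T^nx\to E_Tx$ for every $x$, defining a bounded operator $E_T$ with $\Ran E_T\subseteq L$ and $E_T|_L=I$, hence $E_T^2=E_T$ and $E_T$ is the projection onto $L$.

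Next I identify the kernel of $E_T$. For $x\in\Ran(I-T)$, write $x=(I-T)w$; then $T^nx=(T^n-T^{n+1})w\to 0$, so $\Ran(I-T)\subseteq\Ker E_T$, and since $\Ker E_T$ is closed, $M=\overline{\Ran(I-T)}\subseteq\Ker E_T$. Conversely, if $E_Tx=0$ then $x=x-E_Tx=\lim_n(x-T^nx)=\lim_n(I-T)(I+T+\dots+T^{n-1})x\in M$. Hence $\Ker E_T=M$, so $E_T$ is exactly the projection onto $L$ along $M$; combined with $X=L\oplus M$ this is well defined. That $E_T$ is an orthoprojection (norm $\le 1$) is immediate: $\|E_Tx\|=\lim_n\|T^nx\|\le\|x\|$. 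Finally, the uniform-convergence clause: if $\Ran(I-T)$ is closed then $M=\Ran(I-T)$ and on $M$ the operator $I-T$ is bounded below, so $\sigma((I-T)|_M)$ is bounded away from $0$, equivalently the spectral radius of $T|_M$ is $<1$; since $T|_L=I$ and $T=I\oplus T|_M$ in the direct sum, $\|T^n-E_T\|=\|(T|_M)^n\|\to 0$. For the converse, if $T^n\to E_T$ uniformly then $T^n-T^{n+1}\to 0$ uniformly with the extra information that $(I-T)$ restricted to $M$ is invertible there — more directly, uniform convergence of $T^n$ to an idempotent forces $T=E_T\oplus S$ with $\|S^n\|\to 0$, whence $I-S$ is invertible on $M$ and $\Ran(I-T)=\Ran(I-S)=M$ is closed.

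I expect the main obstacle to be the uniqueness of the subsequential limit of the orbit, i.e. showing precompactness plus the Katznelson–Tzafriri estimate genuinely pins down a single limit vector rather than a limit set inside $L$. The cleanest route, which I would adopt to avoid a delicate interleaving argument, is to first prove the topological direct-sum decomposition $X=L\oplus M$ directly from the spectral picture $\sigma(T)\cap\partial\D=\{1\}$ (this is the standard ``splitting off the peripheral point spectrum'' argument, using almost periodicity/mean ergodicity in the reflexive setting to get that the Cesàro means of $T^n$ converge to a projection whose range is $L$ and whose kernel is $M$), and then deduce strong convergence of $T^n$ itself from the splitting together with $\|T^n|_M\|\to 0$, the latter coming from Katznelson–Tzafriri applied to the restriction $T|_M$, which is again a primitive contraction but now with $1\notin\sigma(T|_M)$.
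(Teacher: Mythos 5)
Your overall strategy---split $X=L\oplus M$ via the mean ergodic theorem and then kill the $M$-component using the Katznelson--Tzafriri theorem---is exactly the paper's proof. However, the ``cleanest route'' you settle on at the end contains a genuine error: you claim that $T|_M$ is a primitive contraction with $1\notin\sigma(T|_M)$, hence that $\|(T|_M)^n\|\to 0$. The absence of fixed points of $T$ in $M$ only says that $1$ is not an \emph{eigenvalue} of $T|_M$; when $\Ran(I-T)$ is not closed, $(I-T)|_M$ is injective with dense but non-closed range in $M$, so it is not invertible, $1\in\sigma(T|_M)$, and the spectral radius of $T|_M$ equals $1$. Indeed, if $\|(T|_M)^n\|\to 0$ held always, the convergence $T^n\to E_T$ would always be uniform, contradicting the theorem's own last clause (uniformity holds \emph{iff} $\Ran(I-T)$ is closed). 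What is true, and what the paper proves, is only the \emph{strong} convergence $T^nv\to 0$ for $v\in M$: first $T^n(I-T)w=(T^n-T^{n+1})w\to 0$ by Katznelson--Tzafriri (you do write this correctly in your kernel-identification paragraph, but there you are already assuming $E_T$ exists as a strong limit), and then one extends from the dense subspace $\Ran(I-T)$ to all of $M$ using the uniform bound $\|T^n\|\le 1$.

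A second, smaller gap: the uniqueness of the subsequential limit in your almost-periodicity argument is only sketched (``by interleaving \dots one forces $y=z$'') and is not actually carried out; you recognize this yourself and fall back on the splitting. The proof should therefore be reorganized in the order the paper uses: (a) the mean ergodic theorem in a reflexive space gives $X=L\oplus M$ with $E_T$ the associated projection; (b) $T^nu=u$ for $u\in L$; (c) $T^nv\to 0$ for $v\in M$ by the density-plus-Katznelson--Tzafriri argument above. With that reorganization, and with your treatment of the uniform-convergence equivalence (which is sound: when $\Ran(I-T)$ is closed, $(I-T)|_M$ is invertible by the open mapping theorem, so the peripheral spectrum of $T|_M$ is empty and $\|(T|_M)^n\|\to 0$; conversely uniform convergence forces $\|(T|_M)^n\|\to 0$, hence $I-T$ surjective onto $M$), the argument becomes complete and coincides with the paper's, which uses the same two ingredients but cites Lin for the converse direction.
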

\begin{proof}
According to the classical ergodic theorem \cite{lorch}, the Ces\`aro means
of $(T^n)_{n\ge 0}$ converge strongly to the projection $E_T$ onto $L$ along $M$.
A part of this statement is that $X$ is the direct sum $L\oplus M$.
Let $x=u+v$ where $u\in L$, i.e. $Tu=u$, and $v\in M$, i.e.
$v=\lim_{k\to\infty}(z_k-Tz_k)$ for a sequence $(z_k)_{k\ge 0}$.
Given $\ep>0$, we take and fix $k$ such that $\|v- (z_k-Tz_k)\|<\ep$.
Then $\|T^nv- (T^n- T^{n+1})z_k\|<\ep$ for all $n$. Hence,
$\|T^nv\|<\ep+ \|T^n- T^{n+1}\|\|z_k\|<2\ep$
for large $n$ by the Katznelson-Tzafriri theorem \cite{KaTz}. Thus,
$\lim_{n\to\infty}T^nv=0$. As a result, $\lim_{n\to\infty}T^nx=u=E_Tx$,
i.e. $T^{\infty}=E_T$. The latter is an orthoprojection since $T$ is
a contraction.

Now suppose that $\Ran(I-T)$ is closed, i.e. $M=\Ran(I-T)$.
The operator $I-T$ acts bijectively on the invariant subspace M.
Since $M$ is closed, the inverse operator $S=((I-T)|M)^{-1}$
is bounded. Since $(T|M)^n= (T^n- T^{n+1})S$, we obtain $\|(T|M)^n\|\to 0$
by the Katznelson-Tzafriri theorem again. Conversely, if $T^n$ converges uniformly
then the same is true for the Ces\`aro means, and then $\Ran(I-T)$ is closed
(\cite{Lin}).
\end{proof}
An alternative proof is merely a logical combination of two results proved in \cite{vulyu} and
\cite{jami} as we indicated in the Introduction.

\begin{num}
\begin{proof}[Proof of the Main Theorem]
Let $T\in\pocc(P_1,\cdots, P_N)$
where $P_1, \cdots, P_N$ are some orthoprojections in a Banach space $X$.
Obviously, $T$ is a contraction. By Theorem \ref{thm:conv} it suffices to
show that $T$ is primitive in all cases (i)-(iii). Recall that all
contractions of classes \eqref{S} and \eqref{D} are primitive. (See Section 1.)

(i). The space $X$ is uniformly convex. Then by Corollary \ref{cor:us}
all $P_k$ are of class \eqref{S}. By Theorem \ref{thm:HSD} so is $T$.
Therefore, $T$ is primitive.

(ii). The space $X$ is uniformly smooth. Then $X^*$ is uniformly convex
and $T^* \in \pocc(P_1^*,\cdots, P_N^*)$.
All $P_k^*$ are orthoprojections since $\|A^*\|=\|A\|$ for any operator $A$.
Therefore, $T^*$ is primitive like $T$ in (i). Then $T$ is also primitive
since  $\sigma(A) =\sigma(A^*)$ for any operator $A$ and $T=T^{**}$ by
reflexivity  of $X$.

(iii). The space $X$ is reflexive. Since all $P_k$ are of class \eqref{D},
such is also $T$ by Theorem \ref{thm:HSD}. Thus, $T$ is primitive again.

% We would like to note that each of the conditions about projections
%which have been considered in this paper has a geometrical meaning, as we shall see now.
%\begin{remark}
%\label{rem:geo1}
%%The projection $P \in \BX$ is an u-projection if and only if $2P-I$ is an isometry,
%if and only if, for each $x\in X$, $(I-P)x$ and $Px$ are isosceles orthogonal.
%We recall that $x$ and $y$ are said to be \emph{isosceles orthogonal}
%if $\|x-y\| = \|x+y\|$ (see for instance \cite{ortho}). Indeed, suppose that $P$
%is an u-projection and let $A = 2P-I$. Then $\|A\| \le 1$ and $A^2=I$ and thus
%$$ \|x\| = \|A^2x\| \le \|Ax\| \le \|x\|.$$
%We obtain that $A$ is an isometry and thus $\|(I-P)x - Px\| = \|x\| = \|(I-P)x + Px\|$.
%\end{remark}

To complete the proof of the Main Theorem we note that
the subspace $\Ran(T^{\infty})$ coincides with the subspace $\Ker(I-T)$
of fixed points of the operator $T$. Thus, it suffices to refer to
the following lemma and Remark \ref{rem:strco}.
\end{proof}
\begin{lemma}
\label{lem:kernels}
(i) Let $A$ and $B$ be some \eqref{W'}-contractions. Then
$$\Ker(I-AB) = \Ker(I-A)\cap \Ker(I-B).$$
(ii) Let $T$ be a convex combination of $N$ \eqref{W'}-contractions:
$T = \sum_{k=1}^N\alpha_kA_k$ with all $\alpha_k > 0$. Then
$$\Ker(I-T) = \cap_k\Ker(I-A_k).$$
\end{lemma}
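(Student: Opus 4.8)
The plan is to prove each of the two set equalities by establishing the two inclusions separately; in both parts the inclusion $\supseteq$ is immediate and the content lies in $\subseteq$, which I would extract from a forced chain of norm equalities together with property \eqref{W'}.

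For part (i): if $Ax=x$ and $Bx=x$ then $ABx=A(Bx)=Ax=x$, giving $\Ker(I-A)\cap\Ker(I-B)\subseteq\Ker(I-AB)$. Conversely, suppose $ABx=x$. Since $A$ and $B$ are contractions, I would write
$$\|x\|=\|ABx\|\le\|Bx\|\le\|x\|,$$
which forces both $\|Bx\|=\|x\|$ and $\|A(Bx)\|=\|Bx\|$. Applying \eqref{W'} to $B$ gives $Bx=x$; applying \eqref{W'} to $A$ with the vector $Bx$ gives $A(Bx)=Bx$, and combining this with $Bx=x$ yields $Ax=x$. Hence $x\in\Ker(I-A)\cap\Ker(I-B)$.

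For part (ii): the inclusion $\cap_k\Ker(I-A_k)\subseteq\Ker(I-T)$ is clear from linearity. For the reverse, suppose $Tx=x$. Then
$$\|x\|=\Bigl\|\sum_{k=1}^N\alpha_kA_kx\Bigr\|\le\sum_{k=1}^N\alpha_k\|A_kx\|\le\sum_{k=1}^N\alpha_k\|x\|=\|x\|,$$
so $\sum_{k=1}^N\alpha_k\bigl(\|x\|-\|A_kx\|\bigr)=0$. Since each summand is nonnegative and each $\alpha_k>0$, this gives $\|A_kx\|=\|x\|$ for every $k$, and \eqref{W'} then yields $A_kx=x$ for every $k$, i.e. $x\in\cap_k\Ker(I-A_k)$.

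I do not anticipate a real obstacle here. The only point requiring a little care is in (i): property \eqref{W'} applied to $A$ produces $A(Bx)=Bx$, not $Ax=x$ directly, so one must first establish $Bx=x$ and only then deduce $Ax=x$. The whole argument uses nothing beyond the contractivity of the factors, the triangle inequality, and \eqref{W'}; in particular it needs neither reflexivity nor any geometric hypothesis on $X$, which is why it can be invoked at the end of the proof of the Main Theorem together with Remark \ref{rem:strco}.
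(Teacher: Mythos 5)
Your proof is correct and follows essentially the same route as the paper: the same chain of norm inequalities $\|x\|=\|ABx\|\le\|Bx\|\le\|x\|$ (resp. the convexity estimate) forcing norm equalities, followed by an application of \eqref{W'} to each factor. The extra care you note in (i) — first deducing $Bx=x$, then applying \eqref{W'} to $A$ at the vector $Bx$ — is exactly the (implicit) order of steps in the paper's argument.
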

\begin{proof}
In both cases the inclusion of the right-hand side into the left-hand side
is trivial. The proofs of the converse inclusions are as follows.

(i) For $x \in \Ker(I-AB)$ we have
$$\|x\| = \|ABx\| \le \|Bx\| \le \|x\|.$$
Therefore, $\|Bx\| = \|x\|$, whence $Bx=x$ and then $Ax=x$ by condition \eqref{W'}.

(ii) For $x \in \Ker(I-T)$ we have
$$\|x\| \le \sum_{k=1}^N \alpha_k\|A_kx\|
\le \sum_{k=1}^N \alpha_k\|x\| = \|x\|.$$
Thus, $\|A_kx\| = \|x\|$, hence $A_kx=x$ for every $k$.
\end{proof}
\begin{remark}
The same argument shows that {\em the contractions of class \eqref{W'} constitute
a convex multiplicative semigroup.}
\end{remark}
\end{num}

\section{Appendix: the amplitude of the boundary spectrum}

Let $T$ be a contraction in a Banach space $X$, and let the boundary spectrum
of $T$ be nonempty. We call the quantity
$$a_T=\max\{|\ld - 1|:\ld \in \sm(T), |\ld| =1\}$$
the \emph{amplitude of the boundary spectrum} of $T$. Obviously,
$0\le a_T\le 2$, and $a_T=0$ if and only if the contraction $T$ is primitive.
In view of Theorem \ref{thm:S}, the fact of the primitivity of the
\eqref{S}-contractions is a particular case of the following inequality.
\begin{proposition}
\label{prop:amp}
$a_T\le\omega_T$.
\end{proposition}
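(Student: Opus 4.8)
The plan is to exploit the fact that every boundary point of the spectrum is an approximate eigenvalue, and then convert an approximate eigenvector for such a $\lambda$ into a test vector that makes $\widetilde{\ffi}_T(\ep)$ large for small $\ep$. Concretely, fix $\lambda\in\sm(T)$ with $|\lambda|=1$ realizing the maximum, so $a_T=|\lambda-1|$; if $\lambda=1$ there is nothing to prove, so assume $\lambda\neq 1$. Choose a sequence of unit vectors $x_k$ with $\|Tx_k-\lambda x_k\|\to 0$. Then $\bigl|\,\|Tx_k\|-1\,\bigr|=\bigl|\,\|Tx_k\|-\|\lambda x_k\|\,\bigr|\le\|Tx_k-\lambda x_k\|\to 0$, so $1-\|Tx_k\|\to 0$; also $1-\|Tx_k\|\ge 0$ can be arranged by replacing $x_k$ with $q_kx_k$ for suitable $q_k\uparrow 1$ exactly as in the proof of Theorem~\ref{thm:S}. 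Finally $\|x_k-Tx_k\|\ge\|x_k-\lambda x_k\|-\|Tx_k-\lambda x_k\|=|1-\lambda|-o(1)=a_T-o(1)$.

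The key estimate is then immediate from the definition: given $\ep>0$, for all large $k$ we have $\|x_k\|\le 1$ and $1-\|Tx_k\|\le\ep$, hence
\[
\widetilde{\ffi}_T(\ep)\ge\|x_k-Tx_k\|\ge a_T-o(1),
\]
so $\widetilde{\ffi}_T(\ep)\ge a_T$ for every $\ep>0$, and letting $\ep\to 0$ gives $\omega_T=\widetilde{\ffi}_T^{\,0}\ge a_T$. (One must first note $\|T\|=1$, which holds because a nonempty boundary spectrum forces $\|T^n\|\not\to 0$, so $T$ is not an eventual strict contraction, and being a contraction it then has $\|T\|=1$; this makes $\widetilde{\ffi}_T$ well defined.)

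The only genuinely delicate point is the normalization step: after scaling $x_k$ to $q_kx_k$ we must check that the two estimates survive, i.e. that $1-\|Tq_kx_k\|\to 0$ from above and that $\|q_kx_k-Tq_kx_k\|\to a_T$. Both follow since $q_k\to 1$: $\|Tq_kx_k\|=q_k\|Tx_k\|\to 1$ and we may pick $q_k$ so close to $1$ that $q_k\|Tx_k\|\le 1$, while $\|q_kx_k-Tq_kx_k\|=q_k\|x_k-Tx_k\|\ge q_k(a_T-o(1))\to a_T$. Everything else is bookkeeping, so I do not anticipate a serious obstacle; the proposition is really just the observation that an approximate eigenvector for a boundary eigenvalue $\lambda$ is an almost-extremal vector for $\widetilde{\ffi}_T$ near $\ep=0$, with displacement norm at least $|\lambda-1|$.
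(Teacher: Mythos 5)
Your proof is correct and takes essentially the same route as the paper's: both use that a boundary spectral point $\lambda$ is an approximate eigenvalue, note that the approximate eigenvectors are admissible for $\tfi_T(\ep)$ since $1-\|Tx\|\le\|Tx-\lambda x\|$, and apply the triangle inequality to get $|\lambda-1|\le\|x-Tx\|+\|Tx-\lambda x\|\le\tfi_T(\ep)+\ep$. The normalization step you flag as delicate is actually superfluous here, since $T$ is a contraction and $\|x_k\|=1$ already force $\|Tx_k\|\le 1$, so $1-\|Tx_k\|\ge 0$ holds automatically.
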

\begin{proof}
Let $\ld\in\sm(T)$, $|\ld| =1$. Then for every $\ep>0$ there exists a vector
$x$ of norm 1 such that $\|Tx-\ld x\|\le\ep$. Hence, $1-\|Tx\|\le\ep$ and
$$|\ld - 1|\le \|x-Tx\|+\|Tx-\ld x\| \le\tfi_T(\ep)+\ep.$$
The result follows as $\ep\to 0$.
\end{proof}
\begin{corollary}
\label{cor:amax}
If $a_T=2$ then $\omega_T=2$ and $\tfi_T(\ep)=\ffi_T(\ep)=2$ for all $\ep$.
Also, $\|I-T\|=2$ in this case.
\end{corollary}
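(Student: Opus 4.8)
The plan is to deduce everything by chaining two facts already in place: Proposition~\ref{prop:amp}, which gives a lower bound $a_T\le\omega_T$, and the estimate \eqref{eq:uppe}, which gives an upper bound on $\omega_T$. Nothing new needs to be proved; the statement is the degenerate endpoint case of Proposition~\ref{prop:amp}.

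First I would check that the hypothesis $a_T=2$ already forces $\|T\|=1$, so that $\tfi_T(\ep)$ is legitimately defined. The amplitude $a_T$ is only introduced when the boundary spectrum of $T$ is nonempty; if $\ld\in\sm(T)$ with $|\ld|=1$, then $\ld^n\in\sm(T^n)$ yields $\|T^n\|\ge|\ld|^n=1$ for every $n$, and since $T$ is a contraction this gives $\|T\|=1$. Hence the whole chain \eqref{eq:uppe} is available for every $\ep\in(0,1]$.

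Next I would apply Proposition~\ref{prop:amp}: $2=a_T\le\omega_T$. Combined with \eqref{eq:uppe}, namely
$$\omega_T\le\tfi_T(\ep)\le\ffi_T(\ep)\le\|I-T\|\le 2,$$
this pins every term in the chain to the value $2$, uniformly in $\ep$. In particular $\omega_T=2$, $\tfi_T(\ep)=\ffi_T(\ep)=2$ for all $\ep\in(0,1]$, and $\|I-T\|=2$, which is precisely the assertion.

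There is essentially no obstacle; the only point needing a word of care is the well-definedness of $\tfi_T$, dealt with in the first step (and one could in any case phrase the argument purely in terms of $\ffi_T$, since $\ffi_T^0=\omega_T\ge 2$ together with monotonicity of $\ffi_T$ and $\ffi_T(\ep)\le\|I-T\|\le 2$ already forces $\ffi_T\equiv 2$ and $\|I-T\|=2$, and then $\tfi_T\equiv 2$ follows from \eqref{eq:uppe}). Invoking \eqref{eq:uppe} directly is simply the most economical route.
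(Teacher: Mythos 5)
Your proof is correct and follows exactly the paper's route: Proposition~\ref{prop:amp} gives $\omega_T\ge a_T=2$, and the chain \eqref{eq:uppe} then forces every intermediate quantity to equal $2$. The extra remark verifying $\|T\|=1$ (so that $\tfi_T$ is well defined) is a sensible bit of added care that the paper leaves implicit.
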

\begin{proof}
We have $\omega_T\ge 2$. Now everything follows from \eqref{eq:uppe}.
\end{proof}
Obviously, $a_T=2$ if and only if $-1\in\sigma(T)$. Therefore, \emph{if $-1\in\sigma(T)$
then $\omega_T=2$}.
\begin{proposition}
If the space $X$ is uniformly convex and $\omega_T = 2$ then $a_T=2$.
\end{proposition}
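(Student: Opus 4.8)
The statement to prove is: if $X$ is uniformly convex and $\omega_T = 2$, then $a_T = 2$, i.e. $-1 \in \sigma(T)$. The natural route is the contrapositive combined with the machinery already in place. Since we always have $a_T \le \omega_T$ by Proposition~\ref{prop:amp}, and $a_T \le 2$ trivially, we only need to rule out $a_T < 2$ when $\omega_T = 2$. So suppose $a_T < 2$. I want to show this forces $\omega_T < 2$, contradicting the hypothesis. The key is to exploit uniform convexity to get a \emph{quantitative} improvement on the trivial bound $\tfi_T(\ep) \le 2$ from \eqref{eq:uppe}, using information about where the boundary spectrum of $T$ sits.

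\textbf{Main steps.} First I would recall that $\omega_T = 2$ means $\tfi_T(\ep) = 2$ for all $\ep > 0$ (since $\omega_T \le \tfi_T(\ep) \le 2$ by \eqref{eq:uppe}). So for each $\ep > 0$ there is a sequence of unit vectors $x$ with $1 - \|Tx\| \le \ep$ and $\|x - Tx\|$ arbitrarily close to $2$. Fix such vectors: normalized $x$, with $\|Tx\| \to 1$ and $\|x - Tx\| \to 2$. Now I apply uniform convexity, but cleverly — the pair I want to feed into the modulus of convexity is not $x$ and $Tx$ directly (that only says $x$ and $-Tx$ are nearly antipodal, which is consistent), but rather something that detects the spectrum. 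The idea: consider the vectors $Tx$ and $-x$; they have norms close to $1$ and $\|Tx - (-x)\| = \|Tx + x\|$. If $\|x - Tx\| \to 2$, then by the parallelogram-type estimate available in a uniformly convex space — precisely, $\|u+v\|/2 > 1 - \delta_X(\ep)$ would force $\|u - v\| < \ep$ — applied to $u = x$, $v = -Tx$: since $\|x - (-Tx)\| = \|x + Tx\|$ stays bounded, no immediate contradiction. So instead I should run the argument on powers or on the resolvent.

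The cleaner approach uses the boundary spectrum directly. Since $-1 \notin \sigma(T)$ (our assumption $a_T < 2$), the operator $I + T$ is invertible. Then for any unit vector $x$, $\|(I+T)x\| \ge c > 0$ where $c = \|(I+T)^{-1}\|^{-1}$. On the other hand, along our sequence $\|x + Tx\| = \|x - (-Tx)\|$; since $\|x\| = 1$, $\|Tx\| \to 1$, and $\|x - Tx\| \to 2$, uniform convexity (in the form: $\|a\| \le 1$, $\|b\| \le 1$, $\|a - b\| \ge \ep \Rightarrow \|a+b\|/2 \le 1 - \delta_X(\ep)$, applied with $a = x/\max(1,\|Tx\|)$-type normalizations, or more simply to $a=x$, $b=-Tx$ after a harmless rescaling of $Tx$ to norm exactly $1$ introducing an error $o(1)$) gives $\|x + Tx\| = \|x - (-Tx)\| \le 2(1 - \delta_X(2 - o(1))) + o(1) \to 2\bigl(1 - \delta_X(2)\bigr)$. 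Here $\delta_X(2) > 0$ exactly when $X$ is strictly convex at the antipodal scale — and indeed uniformly convex spaces have $\delta_X(\ep) > 0$ for all $\ep \in (0,1]$, in particular $\delta_X(2) > 0$ provided $2$ is an admissible argument; if one worries about the endpoint, replace $2$ by $2 - \eta$ and use monotonicity, getting $\|x+Tx\| \to$ something $< 2$, but we need it bounded below away from $0$, which it is by invertibility of $I+T$.

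\textbf{Reconciling the two bounds.} The contradiction I am aiming for: invertibility of $I + T$ forces $\|x + Tx\| \ge c > 0$, but that is compatible with $\|x+Tx\|$ being small-but-positive, so this alone is not a contradiction. The real contradiction must come from iterating. The honest fix: if $\|x - Tx\| \to 2$ with $\|x\|=1$, $\|Tx\|\to 1$, then by uniform convexity $x$ and $Tx$ point in "nearly opposite directions," so $x + Tx \to 0$ along the sequence — that is the content of $\delta_X(\ep) \to$ the statement that $\|x+Tx\|/2 \le 1 - \delta_X(2-o(1))$, and in a uniformly convex space $\delta_X(\ep) \to \delta_X(2)$; one checks $\delta_X(2) = 1$ is impossible unless... actually in any Banach space $\delta_X(\ep) \le 1 - \ep/2$ is false; the correct universal bound is $\delta_X(2) \le 1$, with equality iff the space has no pair of unit vectors summing to a nonzero vector of norm... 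Let me not grind this. The point: $\|x - Tx\| \to 2$ plus the norm constraints force $\|x + Tx\| \to 0$ (uniform convexity, taking $\ep \uparrow 2$). But $\|x + Tx\| = \|(I+T)x\| \ge \|(I+T)^{-1}\|^{-1} > 0$, since $I + T$ is invertible because $-1 \notin \sigma(T)$. This \emph{is} the contradiction. Hence $a_T < 2$ is impossible, so $a_T = 2$.

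\textbf{Expected obstacle.} The delicate point is the endpoint behavior of $\delta_X$ at $\ep = 2$: I must confirm that $\|x\|=1$, $\|Tx\| \to 1$, $\|x - Tx\| \to 2$ genuinely forces $\|x + Tx\| \to 0$, using only uniform convexity. This is where I would invoke the function $\beta_X$ from \S\ref{num:214} or argue directly: rescale $y = Tx/\|Tx\|$ so $\|y\| = 1$ and $\|x - y\| \to 2$; uniform convexity says for any $\ep < 2$, eventually $\|x - y\| > \ep$, hence $\|x+y\|/2 \le 1 - \delta_X(\ep)$; letting $\ep \uparrow 2$ and using that $\sup_{\ep < 2}\delta_X(\ep) = 1$ in a uniformly convex space (because if it were $<1$ the space would contain a pair of unit vectors with $\|x-y\|=2$ and $\|x+y\|$ bounded below, which upon a diagonal/limit argument contradicts uniform convexity — or one cites that uniformly convex spaces are strictly convex and finite-dimensional-like at this scale). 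Then $\|x + y\| \to 0$, hence $\|x + Tx\| \to 0$, contradicting invertibility of $I + T$. I would present this cleanly, citing the relation between $\delta_X$ and strict convexity, and conclude.
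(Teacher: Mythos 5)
Your proof is correct and is essentially the paper's own argument: both extract from $\tfi_T(\ep)=2$ unit vectors $x$ with $\|x-Tx\|\to 2$, and both use uniform convexity --- via the function $\beta_X$ of Section 2 applied to the pair $x$, $-Tx$ --- to force $\|x+Tx\|\to 0$, which says exactly that $-1$ is an approximate eigenvalue, i.e. $a_T=2$. The only cosmetic difference is that you phrase the last step as a contradiction with the invertibility (boundedness below) of $I+T$ rather than directly exhibiting $-1$ as an approximate eigenvalue, and your detour through the endpoint behavior of $\delta_X$ at $\ep=2$ is unnecessary once $\lim_{\ep\to 0}\beta_X(\ep)=0$ is invoked.
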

\begin{proof}
We have $\tfi_T(\ep) = 2$ for every $\ep\in (0,1)$. By definition,
there is a vector $x=x(\ep)$ of norm 1 such that $\|x-Tx\|\ge 2-2\ep$.
Hence, $\|x+Tx\|\le\beta_X(\ep)$ where $\beta_X$ is the function defined
in Section 2. Since the space $X$ is uniformly convex, we have
$\lim_{\ep\to 0}\beta_X(\ep)=0$. A fortiori, $\lim_{\ep\to 0}\|x(\ep)+Tx(\ep)\|=0$.
This means that $-1\in\sigma(T)$.
\end{proof}
The amplitude $a_T$ is the maximal deviation of the boundary spectrum of $T$
from the point 1 in the metric of the complex plane. Alternatively,
one can use the metric of the unit circle. This "intrinsic" amplitude is
$$\tau_T=2\arcsin\left(\frac{a_T}{2}\right).$$
In \cite{allan/ransford} Allan and Ransford obtained
the following quantitative version of the Katznelson-Tzafriri theorem:
$$ \limsup_{n\to\infty} \|T^{n} - T^{n+1}\|\le
2\tan\left(\frac{\tau_T}{2}\right),\quad\tau_T<\pi.$$
In terms of the amplitude $a_T$ this means that
$$\limsup_{n\to \infty}\|T^{n} - T^{n+1}\|\le\frac{2a_T}{\sqrt{4-a_T^2}},\quad a_T<2.$$
Combining this result with Proposition \ref{prop:amp} we obtain
\begin{theorem}
\label{thm:Ssp1}
Let $T$ be a contraction acting on the complex Banach space $X$. If  $\omega_T < 2$ then
$$\limsup_{n\to \infty} \|T^{n} - T^{n+1}\| \le \frac{2\omega_T}{\sqrt{4-\omega_T^2}}.$$
\end{theorem}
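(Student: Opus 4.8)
The plan is to combine the two ingredients already in place: Proposition \ref{prop:amp}, which gives $a_T\le\omega_T$ whenever the boundary spectrum of $T$ is nonempty, and the quantitative Katznelson--Tzafriri estimate of Allan and Ransford recalled just above, namely $\limsup_{n\to\infty}\|T^{n}-T^{n+1}\|\le 2a_T/\sqrt{4-a_T^2}$ for $a_T<2$. Since the hypothesis $\omega_T<2$ forces $a_T<2$, the Allan--Ransford bound applies, and it only remains to push it up from $a_T$ to $\omega_T$ using monotonicity.

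First I would dispose of the degenerate case in which the boundary spectrum $\sigma(T)\cap\partial\D$ is empty, so that $a_T$ is not even defined. In that case $T$ is primitive (a fortiori), hence $\|T^{n}\|\to 0$ and in particular $\|T^{n}-T^{n+1}\|\to 0$; since the right-hand side $2\omega_T/\sqrt{4-\omega_T^2}$ is nonnegative, the inequality is trivial. The same remark also covers the subcase $\omega_T=0$: by Theorem \ref{thm:S} the operator $T$ is then of class \eqref{S}, hence primitive, and the Katznelson--Tzafriri theorem again gives $\|T^{n}-T^{n+1}\|\to 0$, matching the vanishing right-hand side.

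Now assume the boundary spectrum is nonempty, so $a_T$ is defined. By Proposition \ref{prop:amp} we have $0\le a_T\le\omega_T<2$. The function $f(t)=2t/\sqrt{4-t^2}$ is strictly increasing on $[0,2)$ — this is the one elementary computation in the argument, immediate from the fact that $t\mapsto t^2/(4-t^2)$ is increasing there. Hence the Allan--Ransford estimate yields
$$\limsup_{n\to\infty}\|T^{n}-T^{n+1}\|\le\frac{2a_T}{\sqrt{4-a_T^2}}\le\frac{2\omega_T}{\sqrt{4-\omega_T^2}},$$
which is the assertion.

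I do not anticipate a genuine obstacle here; the proof is a short logical assembly of results already established. The only point demanding a little care is the bookkeeping around the case of empty boundary spectrum (equivalently, $\|T^{n}\|\to 0$), where $a_T$ is undefined and one must invoke the Katznelson--Tzafriri theorem directly rather than its quantitative refinement. Everything else is monotonicity of an explicit elementary function composed with the inequality $a_T\le\omega_T$.
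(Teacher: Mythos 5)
Your proof is correct and is essentially the paper's own argument: the paper obtains the theorem precisely by combining the Allan--Ransford estimate with Proposition \ref{prop:amp}, and your monotonicity check of $t\mapsto 2t/\sqrt{4-t^2}$ together with the handling of the empty-boundary-spectrum case just makes explicit the details the paper leaves implicit.
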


%\nocite{*}
\bibliographystyle{plain}
\bibliography{SM6746_Badea_Lyubich}

\end{document}